%
%
%

\documentclass[12pt]{amsart}

\usepackage{amsmath}
\usepackage{amssymb}
\usepackage{amsthm}
\usepackage[OT2,T1]{fontenc}

\setlength{\textheight}{230mm}
\setlength{\textwidth}{160mm}
\setlength{\topmargin}{0mm}
\setlength{\oddsidemargin}{0mm}
\setlength{\evensidemargin}{0mm}

\newtheorem{thm}{Theorem}[subsection]
\newtheorem{lem}[thm]{Lemma}
\newtheorem{prop}[thm]{Proposition}
\newtheorem{cor}[thm]{Corollary}
\newtheorem*{thmX}{Theorem X}
\newtheorem*{thmY}{Theorem Y}
\newtheorem*{thmZ}{Theorem Z}

\theoremstyle{definition}

\newtheorem{example}[thm]{Example}
\newtheorem*{question}{Questions}

\theoremstyle{remark}
\newtheorem{rem}[thm]{Remark}
\newtheorem*{acknowledgments}{Acknowledgments}

\newcommand\Gal{\mathrm{Gal}}
\newcommand\rank{\mathop{\mathrm{rank}_{\mathbb{Z}}}}

\newcommand\Hom{\mathop{\mathrm{Hom}}}
\newcommand\Char{\mathop{\mathrm{Char}_{\Lambda}}}
\newcommand\Sha{\text{\usefont{OT2}{wncyr}{m}{n} Sh}}

\pagestyle{plain}

\begin{document}

\title{On the unramified Iwasawa module of a $\mathbb{Z}_p$-extension generated by 
division points of a CM elliptic curve}
\footnote[0]{2020 Mathematics Subject Classification. 11R23 (11G05, 11G15)}
\author{Tsuyoshi Itoh}
\date{\today}

\maketitle

\begin{abstract}
We consider the unramified Iwasawa module $X (F_\infty)$ of a 
certain $\mathbb{Z}_p$-extension $F_\infty/F_0$ 
generated by division points of an elliptic 
curve with complex multiplication.
This $\mathbb{Z}_p$-extension has properties similar to those of the cyclotomic $\mathbb{Z}_p$-extension 
of a real abelian field, however, 
it is already known that $X (F_\infty)$ can be infinite.
That is, an analog of Greenberg's conjecture for this $\mathbb{Z}_p$-extension fails.
In this paper, we mainly consider analogs of weak forms of Greenberg's conjecture.
\end{abstract}

\section{Introduction}\label{sec_intro}

\subsection{Our questions}\label{sub_questions}
First at all, we explain the situation which we will treat.
In this paper (except for Appendix \ref{sec_appendix}), 
we shall consider the following situation: 
\begin{itemize}
\item[(C1)] $K$ is an imaginary quadratic field whose class number is $1$, 
\item[(C2)] $p$ is an odd prime number which splits two distinct primes 
$\mathfrak{p}$ and $\overline{\mathfrak{p}}$ in $K$, 
\item[(C3)] $E$ is an elliptic curve over $\mathbb{Q}$ which has complex multiplication 
by the ring $O_K$ of integers of $K$, and $E$ has good reduction at $p$.
\end{itemize}
In the following, we assume that $K$, $p$, $E$ satisfy (C1), (C2), (C3).
Many authors treated this situation (or similar situations).
See, e.g., \cite{C-W}, \cite{BGS}, \cite{Per}, \cite{Gil87}, \cite{Sch}, 
\cite{Rub91m}, \cite{Rub91o}, \cite[Section 5]{Gre01}, \cite{F-K}, etc.

We shall recall several known facts 
(see, e.g., \cite{C-W}, \cite{BGS}, \cite[pp.364--365]{Gre01}, \cite[Section 1]{F-K}).
Let $\psi$ be the Gr{\"o}ssencharacter of $E$ over $K$, and put 
$\pi = \psi (\mathfrak{p})$.
Then, $\pi$ is a generator of the principal ideal $\mathfrak{p}$.
For every non-negative integer $n$, 
let $E [\pi^{n+1}] \subset E (\overline{\mathbb{Q}})$ be the group of $\pi^{n+1}$-division points of $E$.
We put $F_n = K (E[\pi^{n+1}])$ for every $n$.
Then $F_n /K$ is an abelian extension, and $\mathfrak{p}$ is totally ramified in $F_n/K$.
We also put $F_\infty = \bigcup_{n} F_n$.
It is known that 
\[ \Gal (F_\infty / K) \cong \Delta \times \Gamma, \]
where $\Delta (\cong \Gal (F_0/K))$ is a cyclic group of order $p-1$ and 
$\Gamma (=\Gal (F_\infty /F_0))$ is 
topologically isomorphic to the additive group of $\mathbb{Z}_p$.
(We often identify $\Delta$ with $\Gal (F_0/K)$ via the natural restriction mapping.)
Let $\mathfrak{P}$ be the unique prime of $F_0$ lying above $\mathfrak{p}$.
Note that $F_\infty / F_0$ is a $\mathbb{Z}_p$-extension 
which is unramified outside $\mathfrak{P}$.

We denote by $L (F_\infty)/ F_\infty$ the maximal unramified abelian pro-$p$ extension 
and $M (F_\infty) /F_\infty$ the maximal abelian pro-$p$ extension 
unramified outside the unique prime lying above $\mathfrak{p}$.
We put $X (F_\infty) = \Gal (L (F_\infty)/ F_\infty)$ (the unramified Iwasawa module) and 
$\mathfrak{X} (F_\infty) = \Gal (M (F_\infty)/ F_\infty)$ 
(the $\mathfrak{p}$-ramified Iwasawa module).
We also put $\Lambda = \mathbb{Z}_p [[\Gamma ]]$.
Then, it is well known that $X (F_\infty)$ is a finitely generated torsion $\Lambda$-module.

We note that $\mathfrak{X} (F_\infty)$ is also a finitely 
generated torsion $\Lambda$-module because the 
``$\{ \mathfrak{P} \}$-adic analog'' of Leopoldt's conjecture  
holds for $F_0$ (see Section 2 for the detail).
Recall that a similar property holds for the ``$p$-ramified Iwasawa module'' of 
the cyclotomic $\mathbb{Z}_p$-extension of real abelian fields. 
(For these topics, see \cite{Gre78}).
We also mention that $\mathfrak{X} (F_\infty)$ is finitely generated 
as a $\mathbb{Z}_p$-module (see \cite{Gil87}, \cite{Sch}, \cite{O-V}), 
and a similar fact for the $p$-ramified Iwasawa module of 
the cyclotomic $\mathbb{Z}_p$-extension of real abelian fields
(with odd $p$) follows from Ferrero-Washington's theorem \cite{F-W} and Kummer duality.
Furthermore, the main conjecture holds for this situation 
(see \cite{Rub91m}, \cite{Rub91o}), 
and the statement is similar to that of the even part version of the main conjecture 
for abelian fields (rather than the odd part version).
(Cf., e.g., \cite{M-W}, \cite[Appendix by Karl Rubin]{Lang}.)
Hence, it might be said that $F_\infty /F_0$ is close to 
the cyclotomic $\mathbb{Z}_p$-extension of real abelian fields in some sense.
We would like to know how many properties these $\mathbb{Z}_p$-extensions have in common.

It is conjectured that the unramified Iwasawa module of the cyclotomic $\mathbb{Z}_p$-extension 
is finite for every totally real field (Greenberg's conjecture \cite{Gre76}).
On the other hand, it is known that $X (F_\infty)$ can be infinite in general.
We denote by $\rank E(\mathbb{Q})$ the free rank of the 
Mordell-Weil group $E (\mathbb{Q})$.

\begin{thmX}[see p.551, Remark of Rubin \cite{Rub87}, pp.364--366 of Greenberg \cite{Gre01}]
Assume that $K$, $p$, $E$ satisfy (C1), (C2), (C3). 
If $\rank E(\mathbb{Q}) \geq 2$, then $X (F_\infty)$ is not finite.
\end{thmX}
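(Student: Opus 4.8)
The plan is to realize an infinite family of everywhere-unramified $\mathbb{Q}_p/\mathbb{Z}_p$-valued characters of $G_{F_\infty}$ out of the Mordell--Weil group via Kummer theory, and thereby force $X(F_\infty)$ to be infinite. The starting observation is that, since $F_\infty = K(E[\pi^\infty])$, the Galois module $E[\pi^\infty] = \bigcup_n E[\pi^{n}]$ is trivial over $F_\infty$ and isomorphic to $\mathbb{Q}_p/\mathbb{Z}_p$; hence $H^1(F_\infty, E[\pi^\infty]) = \Hom(G_{F_\infty}, \mathbb{Q}_p/\mathbb{Z}_p)$, and the everywhere-unramified classes correspond exactly to $\Hom(X(F_\infty), \mathbb{Q}_p/\mathbb{Z}_p)$. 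Because $X(F_\infty)$ is a quotient of the $\mathbb{Z}_p$-finitely generated module $\mathfrak{X}(F_\infty)$, it is itself finitely generated over $\mathbb{Z}_p$, so it is infinite if and only if $\Hom(X(F_\infty), \mathbb{Q}_p/\mathbb{Z}_p)$ is infinite. It therefore suffices to produce a subgroup of $\mathbb{Z}_p$-corank at least $1$ inside $\Hom(X(F_\infty), \mathbb{Q}_p/\mathbb{Z}_p)$. I would feed in the Kummer map $\kappa : E(\mathbb{Q}) \otimes \mathbb{Q}_p/\mathbb{Z}_p \to H^1(F_\infty, E[\pi^\infty])$, whose image has $\mathbb{Z}_p$-corank $\rank E(\mathbb{Q})$ once one knows $\kappa$ is injective.

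Next I would check that every class $\kappa(P)$ is unramified outside $\mathfrak{P}$, so that the image of $\kappa$ lands in $\Hom(\mathfrak{X}(F_\infty), \mathbb{Q}_p/\mathbb{Z}_p)$. For a place $v \nmid p$ of $F_\infty$ the local point group satisfies $E(F_{\infty, v}) \otimes \mathbb{Q}_p/\mathbb{Z}_p = 0$: the formal-group part is pro-$\ell$ with $\ell \ne p$, while the reduction is a torsion group, and a torsion group tensored with the divisible group $\mathbb{Q}_p/\mathbb{Z}_p$ vanishes; hence $\mathrm{res}_v \kappa(P) = 0$. At a place above $\overline{\mathfrak{p}}$ the module $E[\pi^\infty]$ is \'etale (multiplication by $\pi$ is a unit on the formal group at $\overline{\mathfrak{p}}$, the ordinary splitting), so the division tower of $P$ is unramified there. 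Thus the only place of possible ramification is $\mathfrak{P}$, exactly as for the $\mathfrak{p}$-ramified module, and the image of $\kappa$ consists of homomorphisms unramified outside $\mathfrak{P}$.

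The heart of the matter is the local contribution at $\mathfrak{P}$. The restriction $\mathrm{res}_{\mathfrak{P}} \circ \kappa$ factors through $E(K_{\mathfrak{p}}) \otimes \mathbb{Q}_p/\mathbb{Z}_p$, and since $K_{\mathfrak{p}} = \mathbb{Q}_p$ and $E$ has good ordinary reduction at $p$, the formal group gives $E(\mathbb{Q}_p) \cong \mathbb{Z}_p \oplus (\text{finite})$, so $E(\mathbb{Q}_p) \otimes \mathbb{Q}_p/\mathbb{Z}_p \cong \mathbb{Q}_p/\mathbb{Z}_p$ has corank $1$. Consequently the image of $E(\mathbb{Q}) \otimes \mathbb{Q}_p/\mathbb{Z}_p$ in the ramified part of the local cohomology at $\mathfrak{P}$ has corank at most $1$, and by additivity of corank the kernel of the composite map to that local quotient has corank at least $\rank E(\mathbb{Q}) - 1$. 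Every class in this kernel is unramified at $\mathfrak{P}$ as well, hence unramified everywhere, and so factors through $X(F_\infty)$. When $\rank E(\mathbb{Q}) \ge 2$ this produces, via the injectivity of $\kappa$, a subgroup of corank at least $1$ in $\Hom(X(F_\infty), \mathbb{Q}_p/\mathbb{Z}_p)$, forcing $X(F_\infty)$ to be infinite. The single subtracted unit is precisely the corank-$1$ local term at $\mathfrak{P}$, which is what explains the hypothesis $\rank E(\mathbb{Q}) \ge 2$.

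The step I expect to be the main obstacle is the injectivity of $\kappa$ on $E(\mathbb{Q}) \otimes \mathbb{Q}_p/\mathbb{Z}_p$, equivalently the assertion that no nonzero combination of independent rational points becomes infinitely $\pi$-divisible in $E(F_\infty)$. This is needed to guarantee that the image of $\kappa$ has full corank $\rank E(\mathbb{Q})$, so that $\rank E(\mathbb{Q}) - 1 \ge 1$ survives after removing the local term; it is a nondegeneracy statement for the Kummer pairing over the division tower, of the type established by Bashmakov and Ribet for CM elliptic curves, which I would invoke here. A secondary point requiring care is the precise identification of the ramified local quotient at $\mathfrak{P}$ and the verification that the global-to-local map there genuinely factors through $E(\mathbb{Q}_p) \otimes \mathbb{Q}_p/\mathbb{Z}_p$, so that the corank-$1$ bound is an actual upper bound for the obstruction.
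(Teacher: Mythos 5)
Your argument is correct and coincides with the proof the paper relies on: the paper does not prove Theorem X itself but cites Rubin's remark and Greenberg \cite{Gre01}, pp.~364--366, where the argument is exactly this Kummer-theoretic corank count (global classes of corank $\rank E(\mathbb{Q})$ unramified outside $\mathfrak{P}$, minus the corank-$1$ local contribution of $E(\mathbb{Q}_p)\otimes \mathbb{Q}_p/\mathbb{Z}_p$ at $\mathfrak{P}$). The injectivity you single out as the main obstacle is in fact easy here and needs no Bashmakov--Ribet input: $\Delta$ acts on $E[\pi^\infty]$ through the faithful character $\chi$ with no nonzero invariants, so $H^1(\Gal(F_\infty/K), E[\pi^\infty])=0$, restriction from $H^1(K,E[\pi^\infty])$ is injective, and the Kummer map over $K$ is injective on $E(K)\otimes K_{\mathfrak{p}}/O_{\mathfrak{p}}$ because $E(K)$ is finitely generated.
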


Hence, under Greenberg's conjecture, $F_\infty / F_0$ is different from 
the cyclotomic $\mathbb{Z}_p$-extension of a real abelian fields on this point.

We also note that ``weak forms'' of Greenberg's conjecture are considered by several authors
(see \cite{Kra}, \cite{Ich}, \cite{O-T98}, \cite{B-NQD}, \cite{NQD06}, \cite{NQD17I}, 
\cite{Fuj}, etc.).
Based on these studies, we shall consider the following questions.
These are analogs of weak forms of Greenberg's conjecture 
treated in \cite{NQD06}, \cite{NQD17I} (see also \cite{NQD17A}).
We denote by $X (F_\infty)_{\mathrm{fin}}$ the maximal finite $\Lambda$-submodule of 
$X (F_\infty)$.
\begin{itemize}
\item When $X (F_\infty)$ is not trivial, is
$X (F_\infty)_{\mathrm{fin}}$ not trivial?
\item When $X (F_\infty)$ is not trivial, is 
$\Gal (M (F_\infty) / L (F_\infty))$ not trivial?
\end{itemize}

\begin{rem}\label{rem_tame}
In \cite[Appendix A]{Itoh18}, 
similar questions for ``tamely ramified Iwasawa modules'' of 
the cyclotomic $\mathbb{Z}_p$-extension of a totally real field  
are considered.
See also \cite{F-I}.
\end{rem}

\begin{rem}\label{rem_Q1toQ2}
It is known that $\mathfrak{X} (F_\infty)$ does not have a non-trivial finite $\Lambda$-submodule
(see \cite[p.94]{Gre78}).
Hence if $X (F_\infty)_{\mathrm{fin}}$ is not trivial, then 
$\Gal (M (F_\infty) / L (F_\infty))$ is also not trivial 
(cf., e.g., \cite[Lemme 2.1]{NQD06}).
\end{rem}

Actually, it is already known that the second question has an 
affirmative answer for a large family of elliptic curves.

\begin{thmY}[see Lemma 35 of Coates-Wiles \cite{C-W}]
Assume that $K$, $p$, $E$ satisfy (C1), (C2), (C3).
If $\rank E(\mathbb{Q}) \geq 1$, 
then $\Gal (M (F_\infty) / L (F_\infty))$ is not trivial.
\end{thmY}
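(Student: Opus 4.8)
The plan is to produce, out of a single rational point of infinite order, one nontrivial $\mathbb{Z}_p$-extension of $F_\infty$ that lies in $M(F_\infty)$ but not in $L(F_\infty)$; since $\Gal(M(F_\infty)/L(F_\infty))$ is trivial exactly when every abelian pro-$p$ extension of $F_\infty$ unramified outside $\mathfrak{P}$ is already unramified, exhibiting such an extension proves the claim. This follows the strategy of Coates--Wiles: the hypothesis $\rank E(\mathbb{Q}) \geq 1$ gives a point $P \in E(\mathbb{Q})$ of infinite order, and the division points of $P$ by powers of $\pi$ will generate the desired extension. After replacing $P$ by a suitable integer multiple I may assume $P$ lies in the formal group $\hat{E}$ at $\mathfrak{p}$ (i.e. $P$ reduces to the origin modulo $\mathfrak{p}$); it still has infinite order.

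First I would set up Kummer theory for $E$ along the tower. Because $F_\infty = K(E[\pi^\infty])$ contains all $\pi$-power torsion, for each $n$ and each $Q_n$ with $\pi^n Q_n = P$ the map $\sigma \mapsto \sigma(Q_n) - Q_n$ embeds $\Gal(F_\infty(\pi^{-n}P)/F_\infty)$ into $E[\pi^n] \cong O_K/\pi^n$, and these embeddings are compatible. Hence $\mathcal{F}_\infty := \bigcup_n F_\infty(\pi^{-n}P)$ is an abelian pro-$p$ extension of $F_\infty$ with $\Gal(\mathcal{F}_\infty/F_\infty)$ embedded into $\varprojlim_n E[\pi^n] = T_\pi E \cong O_{\mathfrak{p}} \cong \mathbb{Z}_p$; in particular its Galois group is torsion-free.

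The crux, and the step where $\rank E(\mathbb{Q}) \geq 1$ really enters, is that $\mathcal{F}_\infty/F_\infty$ is ramified at $\mathfrak{P}$. Since $E$ has good reduction at $p$ with CM by $O_K$, the formal group $\hat{E}$ over $O_{\mathfrak{p}}$ is a Lubin--Tate formal group for $\pi$, and the formal logarithm satisfies $\log_{\hat{E}}(P) \neq 0$ because $P$ has infinite order. I would argue that $P$ is not infinitely $\pi$-divisible in the completion $F_{\infty,\mathfrak{P}}$: if $\pi^n Q_n = P$ with $Q_n$ in the formal group, then $\log_{\hat{E}}(Q_n) = \log_{\hat{E}}(P)/\pi^n$, whose valuation tends to $-\infty$ as $n$ grows (since $v_{\mathfrak{p}}(\pi) > 0$ and $\log_{\hat{E}}(P) \neq 0$), contradicting $v(\log_{\hat{E}}(Q_n)) > 0$ for points of the formal group over the ring of integers. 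Thus the division tower forces denominators, which a mere unramified extension cannot supply, so $\mathcal{F}_\infty/F_\infty$ is genuinely (and infinitely) ramified at $\mathfrak{P}$. I expect this local non-$\pi$-divisibility to be the main point of the argument, being the elliptic Lubin--Tate analogue of the fact that a nontrivial principal unit has no infinite $p$-power root tower inside the cyclotomic $\mathbb{Z}_p$-extension.

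It then remains to check that $\mathcal{F}_\infty/F_\infty$ is unramified outside $\mathfrak{P}$, which I expect to be routine given the CM hypothesis. At $\overline{\mathfrak{p}}$ the element $\pi$ is a unit, so multiplication by $\pi$ is an automorphism of the formal group there, $E[\pi^\infty]$ is unramified (the reduction is ordinary), and the division tower introduces no ramification. At a prime $w$ of $F_\infty$ above $\ell \neq p$, the inertia subgroup of $G_{F_\infty}$ fixes $E[\pi^\infty]$ by construction, so by the criterion of N\'eron--Ogg--Shafarevich (applicable since $\pi^\infty$-torsion is prime to $\ell$) $E$ has good reduction at $w$ over $F_\infty$; the Kummer classes of points are then unramified at such $w$. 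Alternatively, any residual ramification at $w$ would be tame, hence finite in a $\mathbb{Z}_p$-quotient, and therefore trivial since $\Gal(\mathcal{F}_\infty/F_\infty)$ is torsion-free. Combining the three cases shows $\mathcal{F}_\infty \subseteq M(F_\infty)$, while $\mathcal{F}_\infty \not\subseteq L(F_\infty)$ by ramification at $\mathfrak{P}$; hence $\Gal(M(F_\infty)/L(F_\infty))$ surjects onto a nontrivial quotient, and in particular is not trivial.
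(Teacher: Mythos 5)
The overall strategy you adopt --- Kummer theory along the $\pi$-division tower of a rational point of infinite order, checking unramifiedness outside $\mathfrak{P}$ via good reduction at $\overline{\mathfrak{p}}$ and torsion-freeness of the Galois group at the tame primes --- is indeed the Coates--Wiles argument that the paper simply cites (Lemma 35 of \cite{C-W}), and those parts of your write-up are fine. The gap is in the one step you yourself identify as the crux: the assertion that $v(\log_{\hat{E}}(Q_n))>0$ for points of the formal group over the ring of integers is \emph{false} over the field $F_{\infty,\mathfrak{P}}=\bigcup_n (F_n)_{\mathfrak{P}}$ (and over its unramified extensions). That field is deeply ramified: it contains integral points $x$ with $v(x)$ arbitrarily small, and for such $x$ the terms of $\log_{\hat{E}}$ contribute denominators, so $v(\log_{\hat{E}}(x))$ can be arbitrarily negative (already for $\widehat{\mathbb{G}}_m$ over $\mathbb{Q}_p(\mu_{p^\infty})$ one has integral units whose logarithm has valuation tending to $-\infty$). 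Concretely, a point $Q_n$ with $\pi^nQ_n=P$ lying in $(F_m)_{\mathfrak{P}}$ only forces $m\gtrsim n$; it does not produce a contradiction with $Q_n\in\hat{E}(\mathfrak{m}_{F_{\infty,\mathfrak{P}}})$. So your argument establishes neither that $P$ fails to be infinitely $\pi$-divisible locally, nor --- the stronger statement actually needed --- that the local extension is \emph{ramified} rather than merely nontrivial (the residue field of $F_{\infty,\mathfrak{P}}$ is finite, so nontrivial unramified $\mathbb{Z}_p$-extensions do exist and must be excluded; ``denominators which an unramified extension cannot supply'' is not a proof, for the same reason).

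The missing ingredient is cohomological, not valuation-theoretic. One shows first that the local Kummer image of $P$ in $H^1(K_{\mathfrak{p}},E[\pi^\infty])$ has infinite order (here the logarithm argument is legitimate, because $\hat{E}(\mathfrak{m}_{K_{\mathfrak{p}}})\cong\mathbb{Z}_p$ over the unramified field $K_{\mathfrak{p}}=\mathbb{Q}_p$) and is ramified, since $E[\pi^\infty]$ has no inertia invariants over $K_{\mathfrak{p}}$ and hence $H^1_{\mathrm{ur}}(K_{\mathfrak{p}},E[\pi^\infty])=0$. One then propagates this up the tower using the vanishing
\[
H^1\bigl(\Gal(F_{\infty,\mathfrak{P}}/K_{\mathfrak{p}}),\,E[\pi^{n}]\bigr)=0 ,
\]
which holds because $\Delta$ has order $p-1$ prime to $p$ and acts on $E[\pi]$ through the nontrivial character $\chi$, so that $E[\pi^n]^{\Delta}=0$; by inflation--restriction the class of $P$ stays nonzero on the inertia subgroup of $G_{F_{\infty,\mathfrak{P}}}$, which is exactly the ramification at $\mathfrak{P}$ you need. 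This is the cohomology group the paper alludes to in Appendix \ref{rem_ThB} (the analogue of $H^1(G_\infty,\mathcal{E}_{\pi^{n+1}})$ in \cite{C-W}), and its vanishing is precisely what breaks for $p=2$; any correct proof must use it (or an equivalent), so its complete absence from your argument is a genuine gap rather than a presentational shortcut.
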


Strictly speaking, it was assumed that $p \geq 5$ in \cite{C-W}.
However, one can also show the same assertion for $p=3$ similarly.
This fact seems well known (see \cite[(11.6) Proposition]{Rub87}, \cite{Gre01}).
See also Section \ref{rem_ThB} 
for analogs of Theorems X and Y for the case when $p=2$. 

Let $\phi$ be the isomorphism $\Gal (F_\infty /K) \to \mathbb{Z}_p^\times$ 
which satisfy 
$P^{\sigma} = \phi(\sigma) P$ for all $P \in E[\pi^{n+1}]$ and $\sigma \in \Gal (F_\infty /K)$ 
(see, e.g., \cite[p.364]{Gre01}, \cite{F-K}).
Let $\chi$ be the restriction of $\phi$ on $\Delta$.
We denote by $X (F_\infty)^{\chi}$ (resp. $\mathfrak{X} (F_\infty)^{\chi}$)
the $\chi$-part of $X (F_\infty)$ (resp. $\mathfrak{X} (F_\infty)$).
(For a $\mathbb{Z}_p [\Delta]$-module $M$ appeared later, 
we also write $M^\chi$ for its $\chi$-part $M^\chi$.)
$X (F_\infty)^{\chi}$ and $\mathfrak{X} (F_\infty)^{\chi}$ are also 
considered as $\Lambda$-modules.
In this paper, we mainly treat the $\chi$-part version of the above questions.
\begin{question}
Assume that $K$, $p$, $E$ satisfy (C1), (C2), (C3).
\begin{itemize}
\item[(Q1)] When $X (F_\infty)^{\chi}$ is not trivial, is 
$X (F_\infty)_{\mathrm{fin}}^{\chi}$ not trivial?
\item[(Q2)] When $X (F_\infty)^{\chi}$ is not trivial, is 
$\Gal (M (F_\infty) / L (F_\infty))^{\chi}$ not trivial?
\end{itemize}
\end{question}

\begin{rem}\label{Gamma_coinvariant}
Let $A (F_0)$ be the Sylow $p$-subgroup of the ideal class group of $F_0$.
Since $\mathfrak{P}$ is the only prime which ramifies in $F_\infty/F_0$ and it is 
totally ramified, the $\Gamma$-coinvariant quotient 
$(X(F_\infty)^{\chi})_\Gamma$ is isomorphic to $A(F_0)^{\chi}$ 
(see, e.g., \cite[Theorem 5.1]{Rub91o}).
From this, we see that $X(F_\infty)^{\chi}$ is trivial if and only if 
$A(F_0)^{\chi}$ is trivial.
\end{rem}

Note that Theorems X and Y actually give the results for the $\chi$-part.
(See \cite[p.250]{C-W}, \cite[p.551, Remark]{Rub87}, \cite[p.365]{Gre01}.) 
\begin{thmZ}
Assume that $K$, $p$, $E$ satisfy (C1), (C2), (C3). 
\begin{itemize}
\item[(i)] If $\rank E(\mathbb{Q}) \geq 2$, 
then $X (F_\infty)^\chi$ is not finite.
\item[(ii)] If $\rank E(\mathbb{Q}) \geq 1$, 
then $\Gal (M (F_\infty) / L (F_\infty))^\chi$ is not trivial.
\end{itemize}
\end{thmZ}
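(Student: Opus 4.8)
The strategy is to observe that the distinguished elements produced in the proofs of Theorems X and Y already lie in the $\chi$-isotypic component, so that Theorem Z follows by tracking the $\Delta$-action through those constructions rather than by any essentially new argument. Since $\Delta$ has order $p-1$, a unit in $\mathbb{Z}_p$, every $\mathbb{Z}_p[\Delta]$-module in sight splits into $\chi$-eigenspaces and the functor $M \mapsto M^\chi$ is exact. In particular the class field theory sequence
\[ 0 \to \Gal(M(F_\infty)/L(F_\infty)) \to \mathfrak{X}(F_\infty) \to X(F_\infty) \to 0 \]
of $\Lambda[\Delta]$-modules stays exact on $\chi$-parts, so one may argue component by component throughout; Remark \ref{Gamma_coinvariant} confirms that this localization is the natural one, since triviality of $X(F_\infty)^\chi$ is already detected by $A(F_0)^\chi$.

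For part (ii) I would revisit the Coates--Wiles construction underlying Theorem Y (\cite[Lemma 35]{C-W}). A point $P \in E(\mathbb{Q})$ of infinite order determines, through the formal group of $E$ at $\mathfrak{p}$ together with a norm-coherent system of local units associated with $P$, a class that is nonzero under Wiles' explicit reciprocity law and that lands in $\Gal(M(F_\infty)/L(F_\infty))$ but not in the unramified quotient $X(F_\infty)$. The point to check is the $\chi$-equivariance: the reciprocity pairing runs against the division points $E[\pi^{n+1}]$, on which $\Delta$ acts through $\phi$, whose restriction to $\Delta$ is exactly $\chi$, while $P$ is $K$-rational and hence fixed by $\Delta$. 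Matching these two facts forces the resulting local class into the $\chi$-eigenspace, so that the nonzero element in fact witnesses $\Gal(M(F_\infty)/L(F_\infty))^\chi \neq 0$.

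For part (i) I would follow the Rubin--Greenberg analysis (\cite[p.\,551, Remark]{Rub87}, \cite[pp.\,364--366]{Gre01}), which bounds the $\mathbb{Z}_p$-rank of $X(F_\infty)$ from below in terms of $\rank E(\mathbb{Q})$ by relating $X(F_\infty)$ to the Selmer group of $E$ over $F_\infty$. Because $E$ is defined over $\mathbb{Q}$ with complex multiplication by $O_K$, the Mordell--Weil and Selmer contributions feed into the $\chi$-component once the action on $E[\pi]$ is taken into account; the unique prime above $\mathfrak{p}$ accounts for a correction of $1$, so the $\chi$-part of $X(F_\infty)$ acquires $\mathbb{Z}_p$-rank at least $\rank E(\mathbb{Q}) - 1$. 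When $\rank E(\mathbb{Q}) \geq 2$ this is positive, and since $X(F_\infty)$ is $\Lambda$-torsion, a positive $\mathbb{Z}_p$-rank of $X(F_\infty)^\chi$ forces it to be infinite.

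The main obstacle I anticipate is the character bookkeeping in part (ii): one must verify carefully that pairing the $\Delta$-invariant rational point against the $\chi$-twisted division tower produces a class supported on the $\chi$-eigenspace and not on some other isotypic piece, i.e., that the explicit reciprocity law carries exactly the expected $\chi$-twist. The analogous point in part (i) is to confirm that the rank inequality holds for the $\chi$-components individually, rather than only for the total modules $X(F_\infty)$ and the Selmer group.
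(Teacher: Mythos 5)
Your proposal is correct and follows essentially the same route as the paper, which gives no independent proof of Theorem Z but simply points to \cite[p.250]{C-W}, \cite[p.551, Remark]{Rub87} and \cite[p.365]{Gre01} with the observation that the arguments there (the Coates--Wiles construction from a point of infinite order for (ii), and the lower bound $\lambda \geq \rank E(\mathbb{Q})-1$ coming from the divisibility of $\Char X(F_\infty)^\chi$ by $(T+1-\kappa(\gamma_0))^{r-1}$ for (i)) are already statements about the $\chi$-component. Your character bookkeeping via the isomorphism $E[\pi]\cong \mathcal{U}^1/\mathcal{U}^2$ and the exactness of $M\mapsto M^\chi$ is exactly the verification the paper leaves implicit.
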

Hence, (Q2) has an affirmative answer when $\rank E(\mathbb{Q}) \geq 1$.

\subsection{Organization of the present paper}
Our purposes of this paper are giving several criteria for (Q1), (Q2),  
and confirming these questions for specific elliptic curves.
In Section \ref{sec_criteria}, we will give the criteria.

We shall give examples in Section \ref{sec_examples}.
First, we remark that the examples given in Fukuda-Komatsu's paper \cite{F-K} 
are also examples for our questions (Section \ref{sec_F-K}).
We also give an example for (Q2) in Section \ref{sub_307}.
These examples are elliptic curves of the form $y^2 = x^3 - d x$ with $p=5$.
In Section \ref{sub_11}, we shall treat the elliptic curves of the form 
$y^2 = x^3 - 264 d^2 x + 1694 d^3$ with $p=3$.
Consequently, for the question (Q1), we found that the following cases actually exist.
\begin{itemize}
\item $X (F_\infty)^\chi$ is infinite and 
$X (F_\infty)_{\mathrm{fin}}^{\chi}$ is trivial
(i.e., (Q1) has a negative answer).  
\item $X (F_\infty)^\chi$ is infinite and 
$X (F_\infty)_{\mathrm{fin}}^{\chi}$ is non-trivial.
\item $X (F_\infty)^\chi$ is non-trivial and finite. 
\end{itemize}
On the other hand, for most of the cases which we examined, 
(Q2) has an affirmative answer (see, e.g., Section \ref{sub_11}).
In particular, there are examples such that (Q2) has an affirmative answer  
and (Q1) has an negative answer.

In Appendix \ref{sec_appendix}, we will treat the case when $p=2$. 
We shall consider similar questions (Q1t), (Q2t), and 
we will show that these questions are equivalent.

\subsection{Changes from the previous version}
(This subsection is written only in the arXiv version.)
The main change from \texttt{arXiv:2001.04687v6} (abbreviated as \texttt{v6}) 
is that the contents of Appendix A of \texttt{v6} was moved to Section 2.
Theorem A.1.1 of \texttt{v6} was moved to Remark \ref{rem_old}.
Note that the proof was slightly modified, and the result was extended to the case when $p \geq 3$.
Example A.3.1 was moved to Section \ref{sub_307}.
As a consequence, Appendix B of \texttt{v6} was renamed to Appendix A of this version.
Moreover various texts were modified mainly to shorten this paper.

\section{Criteria for the questions (Q1) and (Q2)}\label{sec_criteria}

\subsection{Preliminaries}\label{sub_criteria_pre}
Let the notation be as in Section \ref{sec_intro}.
We also define the following notation:
\begin{itemize}
\item $K_{\mathfrak{p}}$ : the completion of $K$ at $\mathfrak{p}$,
\item $(F_0)_{\mathfrak{P}}$ : the completion of $F_0$ at $\mathfrak{P}$,
\item $O_{\mathfrak{P}}$ : the valuation ring of $(F_0)_{\mathfrak{P}}$, 
\item $\mathcal{U}^i = 1 + \mathfrak{P}^i O_{\mathfrak{P}}$ (for $i=1,2$), 
\item $E (F_0)^1$ : the group of units of $F_0$ which are congruent to $1$ modulo $\mathfrak{P}$, 
\item $\mathcal{E}^1$ : the closure of $E (F_0)^1$ in $\mathcal{U}^1$,
\end{itemize}
By class field theory, we see that 
\[ \Gal (M (F_0)/L(F_0)) \cong \mathcal{U}^1/ \mathcal{E}^1. \]
Note that the $\{ \mathfrak{P} \}$-adic analog of Leopoldt's conjecture 
(for $F_0$) asserts 
that the $\mathbb{Z}_p$-rank of $\mathcal{E}^1$ is equal to the free rank of 
the group of global units of $F_0$ 
(for the name of this conjecture, we followed \cite{Gre78}).
Recall that this holds true since $F_0/K$ is an abelian extension 
(see \cite{Bru} and \cite{Gre78}).

We fix a topological generator $\gamma_0$ of $\Gamma$, and we shall identify 
$\Lambda$ with $\mathbb{Z}_p [[T]]$ ($\gamma_0 \mapsto 1+T$).
For a finitely generated torsion $\Lambda$-module $Y$, 
$Y^\Gamma$ denotes the $\Gamma$-invariant submodule of $Y$, 
$Y_\Gamma$ denotes the $\Gamma$-coinvariant quotient of $Y$, and 
$\Char Y$ denotes the characteristic ideal of $Y$.
For a finite group $B$, let $|B|$ be the order of $B$.

Since the $\{ \mathfrak{P} \}$-adic analog of Leopoldt's conjecture holds for $F_0$, 
we see that $\mathfrak{X} (F_\infty)^\chi_\Gamma$ is finite.
From this, we can deduce that the a generator of $\Char \mathfrak{X} (F_\infty)^\chi$ is 
not divisible by $T$ 
(the same result holds for $X (F_\infty)^\chi$). 
Moreover, $(\mathfrak{X} (F_\infty)^\chi)^\Gamma$ is trivial 
since $\mathfrak{X} (F_\infty)^\chi$ does not have a non-trivial finite 
$\Lambda$-module.
We can also show that 
$(X (F_\infty)^\chi)^\Gamma = (X (F_\infty)^\chi_{\mathrm{fin}})^\Gamma$.
The following isomorphisms and exact sequences play important roles in this section.
\begin{equation}\label{eq_pre1}
\mathfrak{X} (F_\infty)^\chi_\Gamma \cong 
\Gal (M (F_0)/F_0)^\chi \quad \text{and} \quad 
X (F_\infty)^\chi_\Gamma \cong A (F_0)^\chi.
\end{equation}
\begin{equation}\label{eq_pre2}
0 \to (\mathcal{U}^1/ \mathcal{E}^1)^\chi \to  \Gal (M (F_0)/F_0)^\chi \to A (F_0)^\chi \to 0.
\end{equation}
\begin{equation}\label{eq_pre3}
0 \to (X (F_\infty)^\chi)^\Gamma \to
\Gal (M (F_\infty) / L (F_\infty))^\chi_{\Gamma} \to \mathfrak{X} (F_\infty)^\chi_\Gamma \to 
X (F_\infty)^\chi_\Gamma \to 0.
\end{equation}
For the results given in this paragraph, note that similar results 
hold for the case of the cyclotomic $\mathbb{Z}_p$-extension 
of real abelian fields, and one can also show our results quite similarly. 
See, e.g., \cite{Oza95}, \cite{O-T95}, \cite{Oza97J}, \cite{Oza97T}, \cite{A-NQD}, \cite{B-NQD}, \cite{Fuj}.

\begin{rem}\label{rem_Maire}
Note that $L(F_0) F_\infty /F_\infty$ is the maximal unramified subextension of 
$M (F_0)/ F_\infty$ in our situation, and $\Gal (M (F_0)/L(F_0) F_\infty)$ is
isomorphic to the $\mathbb{Z}_p$-torsion subgroup of $\mathcal{U}^1/\mathcal{E}^1$
(this can be shown by using the same argument given in \cite[Section 4]{Fuj}).
Hence, for the question on the non-triviality of $\Gal (M (F_\infty )/L(F_\infty))$, 
it seems significant to study the $\mathbb{Z}_p$-torsion subgroup of $\mathcal{U}^1/\mathcal{E}^1$ 
(more generally, a similar object of $F_n$).
Christian Maire gave a remark on the earlier studies on 
the structure of the $\mathbb{Z}_p$-torsion subgroup of the ``group of (semi) local units modulo 
the completion of the group of global units''.
In particular, studying analogous objects of the ``Kummer-Leopoldt constant'' and 
the ``$p$-adic normalized regulator'' (see \cite{A-NQD}, \cite{Gras}) may be useful.
See also Appendix \ref{sec_appendix}.
\end{rem}

\subsection{Criteria for (Q2)}
\begin{lem}[cf.~e.g., \cite{Kra}, \cite{O-T95}]\label{lem_U/Enot1}
Assume that $K$, $p$, $E$ satisfy (C1), (C2), (C3).
If $(\mathcal{U}^1 / \mathcal{E}^1)^\chi$ is not trivial, 
then $\Gal (M (F_\infty) / L (F_\infty))^\chi$ is not trivial.
\end{lem}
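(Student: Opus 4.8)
The plan is to combine the two exact sequences \eqref{eq_pre2} and \eqref{eq_pre3} recorded in the preliminaries, since together they express the $\Gamma$-coinvariants of $\Gal(M(F_\infty)/L(F_\infty))^\chi$ in terms of $(\mathcal{U}^1/\mathcal{E}^1)^\chi$. The starting observation is that the rightmost map $\mathfrak{X}(F_\infty)^\chi_\Gamma \to X(F_\infty)^\chi_\Gamma$ occurring in \eqref{eq_pre3} is, under the identifications \eqref{eq_pre1}, precisely the natural surjection $\Gal(M(F_0)/F_0)^\chi \to A(F_0)^\chi$ appearing in \eqref{eq_pre2}. Indeed, the comparison map $\mathfrak{X}(F_\infty) \to X(F_\infty)$ is induced by restriction along $L(F_\infty) \subseteq M(F_\infty)$, and since taking $\chi$-parts (an exact projection, as $p \nmid |\Delta| = p-1$) and $\Gamma$-coinvariants is functorial, this map matches the restriction map $\Gal(M(F_0)/F_0)^\chi \to \Gal(L(F_0)/F_0)^\chi \cong A(F_0)^\chi$. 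Verifying this compatibility of the two descriptions is the one place where care is needed; it is, however, a standard functoriality of the Iwasawa modules and the sequences that produce them, and parallels the cyclotomic case treated in \cite{Kra}, \cite{O-T95}.

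Granting this, first I would read off from \eqref{eq_pre2} that the kernel of $\mathfrak{X}(F_\infty)^\chi_\Gamma \to X(F_\infty)^\chi_\Gamma$ is exactly $(\mathcal{U}^1/\mathcal{E}^1)^\chi$. By exactness of \eqref{eq_pre3}, this kernel coincides with the image of the preceding map $\Gal(M(F_\infty)/L(F_\infty))^\chi_\Gamma \to \mathfrak{X}(F_\infty)^\chi_\Gamma$. Splicing the two sequences then yields a short exact sequence
\[ 0 \to (X(F_\infty)^\chi)^\Gamma \to \Gal(M(F_\infty)/L(F_\infty))^\chi_\Gamma \to (\mathcal{U}^1/\mathcal{E}^1)^\chi \to 0, \]
and in particular a surjection $\Gal(M(F_\infty)/L(F_\infty))^\chi_\Gamma \twoheadrightarrow (\mathcal{U}^1/\mathcal{E}^1)^\chi$.

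Finally, I would conclude directly. If $(\mathcal{U}^1/\mathcal{E}^1)^\chi$ is non-trivial, then the surjection above forces $\Gal(M(F_\infty)/L(F_\infty))^\chi_\Gamma$ to be non-trivial; and since the $\Gamma$-coinvariant quotient $\Gal(M(F_\infty)/L(F_\infty))^\chi_\Gamma$ is a quotient of $\Gal(M(F_\infty)/L(F_\infty))^\chi$, the latter is non-trivial as well, which is the assertion. I expect the only genuine content to be the identification of the two maps in the first paragraph; once that is in place, the remainder is a short diagram chase followed by the elementary observation that a module with non-trivial coinvariants is non-trivial.
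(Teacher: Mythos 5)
Your argument is correct and is essentially the paper's own proof: the paper likewise combines \eqref{eq_pre1}, \eqref{eq_pre2}, \eqref{eq_pre3} (phrased as the contrapositive, following \cite[Lemma 2]{O-T95}), and your identification of the last map of \eqref{eq_pre3} with the surjection in \eqref{eq_pre2} is exactly the compatibility implicitly used there. Your spliced short exact sequence just makes the diagram chase explicit, yielding the same conclusion.
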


\begin{proof}
This proposition can be obtained by using the same argument given in the 
proof of \cite[Lemma 2]{O-T95}, 
which treats the case of the cyclotomic $\mathbb{Z}_p$-extension of real abelian fields.
(See also \cite[Theorem 3]{Kra}.)
In fact, by using (\ref{eq_pre1}), (\ref{eq_pre2}), (\ref{eq_pre3}), 
we can see that the triviality of $\Gal (M (F_\infty) / L (F_\infty))^\chi$ 
implies the triviality of $(\mathcal{U}^1 / \mathcal{E}^1)^\chi$.
\end{proof}

\begin{prop}\label{prop_Q2}
Assume that $K$, $p$, $E$ satisfy (C1), (C2), (C3).
\begin{itemize}
\item[(i)] If $\mathcal{E}^1$ is contained in $\mathcal{U}^2$, then 
$\Gal (M (F_\infty) / L (F_\infty))^\chi$ is not trivial.
\item[(ii)] If $\mathcal{U}^1$ contains a primitive $p$th root of unity, then 
$\Gal (M (F_\infty) / L (F_\infty))^\chi$ is not trivial.
\end{itemize}
\end{prop}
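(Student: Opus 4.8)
The plan is to deduce both parts from Lemma~\ref{lem_U/Enot1}: it suffices to prove that $(\mathcal{U}^1/\mathcal{E}^1)^\chi$ is non-trivial in each case. The common local input is the $\Delta$-action on the one-dimensional $\mathbb{F}_p$-space $\mathcal{U}^1/\mathcal{U}^2 \cong \mathfrak{P}/\mathfrak{P}^2$. Since $(F_0)_{\mathfrak{P}}/K_{\mathfrak{p}}$ is the totally ramified Lubin--Tate extension of degree $p-1$ attached to the formal group of $E$ at $\mathfrak{p}$, a uniformizer $\lambda$ of $O_{\mathfrak{P}}$ satisfies $\sigma(\lambda) \equiv \phi(\sigma)\,\lambda \pmod{\mathfrak{P}^2}$ for $\sigma \in \Delta$, where $\phi$ is as in the introduction. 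Hence $\Delta$ acts on $\mathcal{U}^1/\mathcal{U}^2$ through $\chi = \phi|_\Delta$, so that $(\mathcal{U}^1/\mathcal{U}^2)^\chi = \mathcal{U}^1/\mathcal{U}^2 \cong \mathbb{F}_p$ is non-trivial. Throughout I use that, because $p \nmid |\Delta| = p-1$, taking $\chi$-parts is an exact functor on $\mathbb{Z}_p[\Delta]$-modules (it is given by the idempotent $e_\chi \in \mathbb{Z}_p[\Delta]$).

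For (i), assume $\mathcal{E}^1 \subseteq \mathcal{U}^2$. Then the natural projection $\mathcal{U}^1/\mathcal{E}^1 \twoheadrightarrow \mathcal{U}^1/\mathcal{U}^2$ is $\Delta$-equivariant and surjective, so applying the $\chi$-part functor yields a surjection $(\mathcal{U}^1/\mathcal{E}^1)^\chi \twoheadrightarrow (\mathcal{U}^1/\mathcal{U}^2)^\chi \neq 0$. Thus $(\mathcal{U}^1/\mathcal{E}^1)^\chi \neq 0$, and Lemma~\ref{lem_U/Enot1} gives the conclusion.

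For (ii), suppose $\zeta_p \in \mathcal{U}^1$, i.e.\ $\mu_p \subseteq (F_0)_{\mathfrak{P}}$. Comparing degrees (both $(F_0)_{\mathfrak{P}}$ and $\mathbb{Q}_p(\zeta_p)$ are totally ramified of degree $p-1$ over $K_{\mathfrak{p}} = \mathbb{Q}_p$) forces $(F_0)_{\mathfrak{P}} = \mathbb{Q}_p(\zeta_p)$, and since $\zeta_p - 1$ is then a uniformizer we have $\zeta_p \notin \mathcal{U}^2$. The image of $\mu_p$ therefore generates $\mathcal{U}^1/\mathcal{U}^2$, on which $\Delta$ acts simultaneously through the cyclotomic character $\omega$ and, by the first paragraph, through $\chi$; hence $\chi = \omega$ on $\Delta$, and the torsion subgroup $\mu_p$ of $\mathcal{U}^1$ lies in the $\chi$-eigenspace $(\mathcal{U}^1)^\chi$. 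On the other hand $\mathcal{E}^1$ is $\mathbb{Z}_p$-torsion-free: no global root of unity of $F_0$ can contribute, because $\mu_p \not\subseteq F_0$ (indeed $F_0/K$ is unramified at $\overline{\mathfrak{p}}$, whereas $K(\mu_p)/K$ is ramified there), so $E(F_0)^1$ is torsion-free, and the $\{\mathfrak{P}\}$-adic analogue of Leopoldt's conjecture makes the induced map $\mathbb{Z}_p \otimes_{\mathbb{Z}} E(F_0)^1 \to \mathcal{U}^1$ injective. Taking $\chi$-parts of $0 \to \mathcal{E}^1 \to \mathcal{U}^1 \to \mathcal{U}^1/\mathcal{E}^1 \to 0$ gives $(\mathcal{U}^1/\mathcal{E}^1)^\chi = (\mathcal{U}^1)^\chi/(\mathcal{E}^1)^\chi$; as the numerator contains the non-trivial torsion $\mu_p$ while the denominator is torsion-free, the quotient cannot vanish (were it zero, $(\mathcal{U}^1)^\chi$ would coincide with the torsion-free $(\mathcal{E}^1)^\chi$). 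Lemma~\ref{lem_U/Enot1} again concludes.

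The step requiring the most care is part (ii): one must ensure that the torsion $\mu_p \subseteq \mathcal{U}^1$ genuinely survives in $\mathcal{U}^1/\mathcal{E}^1$, i.e.\ that $\zeta_p$ is not absorbed into the closure $\mathcal{E}^1$ of the global units. This is exactly where torsion-freeness of $\mathcal{E}^1$ is essential, and it rests on the two inputs highlighted above, namely $\mu_p \not\subseteq F_0$ (from the ramification behaviour at $\overline{\mathfrak{p}}$) together with the $\{\mathfrak{P}\}$-adic Leopoldt conjecture; without these a $\mathbb{Z}_p$-combination of global units could a priori converge to $\zeta_p$. The remaining ingredients, the Lubin--Tate identification of the $\Delta$-action on $\mathcal{U}^1/\mathcal{U}^2$ (hence $\chi = \omega$ in case (ii)) and the exactness of the $\chi$-part functor, are routine given $p \nmid p-1$.
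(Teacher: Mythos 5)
Your argument is correct and follows essentially the same route as the paper: both parts reduce, via Lemma \ref{lem_U/Enot1}, to the non-triviality of $(\mathcal{U}^1/\mathcal{E}^1)^\chi$, using that $\Delta$ acts on $\mathcal{U}^1/\mathcal{U}^2$ through $\chi$ (the paper cites the $\mathbb{Z}_p[\Delta]$-isomorphism $E[\pi]\cong\mathcal{U}^1/\mathcal{U}^2$ where you give the equivalent Lubin--Tate computation), and, for (ii), that $\mu_p\not\subseteq F_0$ together with the $\{\mathfrak{P}\}$-adic analogue of Leopoldt's conjecture forces $\mathcal{E}^1$ to be $\mathbb{Z}_p$-torsion-free so that $\zeta_p$ survives in the quotient. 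Your write-up simply makes explicit some steps the paper delegates to references.
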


\begin{proof}
There is a $\mathbb{Z}_p [\Delta]$-module isomorphism 
\begin{equation}\label{principal_units}
E [\pi] \cong \mathcal{U}^1 / \mathcal{U}^2.
\end{equation}
(See \cite[Lemma 10.4]{Rub99}. Note that it was assumed that $p>7$ 
at \cite[Section 10]{Rub99}, however, 
we can show that this assertion holds for $p \geq 3$.
See also \cite[Lemma 9]{C-W}.)
Then, (i) follows from this isomorphism and Lemma \ref{lem_U/Enot1}.

We shall prove (ii). 
Assume that $\mathcal{U}^1$ contains a primitive $p$th root of unity $\zeta_p$.
That is, $(F_0)_{\mathfrak{P}}$ is isomorphic to $\mathbb{Q}_p (\zeta_p)$ 
(see also the proof of \cite[Lemma 12]{C-W}).
Since $\zeta_p \, \mathcal{U}^2$ generates $\mathcal{U}^1 / \mathcal{U}^2$, 
it follows that $\zeta_p$ is contained in $(\mathcal{U}^1)^\chi$.
We claim that $\mathcal{E}^1$ does not contain $\zeta_p$.
Note that the global field $F_0$ does not contain 
a primitive $p$th root of unity. 
(If it contains, then both primes of $K$ lying above $p$ ramifies.
However, it cannot be occurred because $E$ has good reduction at $p$.
See, e.g., \cite[Corollary 3.17]{Rub99}.)
By combining this fact and the validity of the 
$\{ \mathfrak{P} \}$-adic analog of Leopoldt's conjecture, the claim can be shown 
(cf.~also, e.g., \cite[Lemma 3.1 and Corollary 3.2]{Gras}).
By this claim, we see that $(\mathcal{U}^1 / \mathcal{E}^1)^\chi$ is not trivial.
\end{proof}

\begin{rem}\label{rem_unit}
Assume that $\mathcal{U}^1$ does not contain any primitive $p$th root of unity.
In this case, $(\mathcal{U}^1)^\chi$ is a free $\mathbb{Z}_p$-module of rank $1$. 
By using this fact and (\ref{principal_units}), 
we can see that $(\mathcal{U}^1 / \mathcal{E}^1)^\chi$ is trivial 
if and only if there is a (global) unit $u$ of $F_0$ such that 
$u^{p-1} \not\equiv 1 \pmod{\mathfrak{P}^2}$.
\end{rem}

\begin{rem}\label{rem_Fp}
Let $\widetilde{E} (\mathbb{F}_p)$ be the group of 
$\mathbb{F}_p$-rational points of the reduction of $E$ at $p$.
Assume that $|\widetilde{E} (\mathbb{F}_p)|$ is divisible by $p$.
Then we can see that 
$\psi (\mathfrak{p}) + \overline{\psi (\mathfrak{p})} \equiv 1 \pmod{p}$,
where $\psi$ is the Gr{\"o}ssencharacter of $E$ over $K$
(see, e.g., \cite[Chapter II, Corollary 10.4.1 (b)]{Sil2}).
By using the argument given in the proof of \cite[Lemma 12]{C-W}, 
we see that $(F_0)_{\mathfrak{P}}$ contains a primitive $p$-th root of unity.
Hence (Q2) has an affirmative answer by Proposition \ref{prop_Q2} (ii).
\end{rem}

\begin{rem}\label{rem_old}
Let $L(E/\mathbb{Q}, s)$ (resp. $L(E/K, s)$) be the complex $L$-function of $E$ over $\mathbb{Q}$
(resp. over $K$). 
We assume that $L (E/\mathbb{Q}, 1) \neq 0$. 
In this situation, we can show that if the $p$-rank of $A (F_0)^\chi$ is odd 
then (Q2) has an affirmative answer.
We give an outline of the proof.
We first note that $L(E/K, 1)$ is also not equal to $0$, and then 
$E (K)$ is finite (see, e.g., \cite{C-W}, \cite{Mil}).
Let $S_{\pi} (E/K) (\subset H^1 (\Gal (\overline{K}/K), E [\pi]))$ 
be the Selmer group relative to $\pi$, 
and $S'_{\pi} (E/K)$ the enlarged Selmer group relative to $\pi$ 
(see, e.g., \cite[p.32]{Per}, \cite[Definition 6.3]{Rub99}).
We may assume that $\widetilde{E} (\mathbb{F}_p) \not\equiv 0 \pmod{p}$ 
(see Remark \ref{rem_Fp}).
Under this assumption, we can show that $S'_{\pi} (E/K) \cong S_{\pi} (E/K)$ 
(see also \cite[p.35]{Per}).
Note that 
\[ S'_{\pi} (E/K) \cong \Hom (\Gal (M (F_0)/ F_0)^\chi, E [\pi]). \]
(See \cite[Theorem 6.5]{Rub99}. 
In our situation, this holds even when $p=3$.)
We claim that the $p$-rank of $S'_{\pi} (E/K)$ is even.
Let $\Sha (E/K)$ (resp. $\Sha (E/\mathbb{Q})$) be the Tate-Shafarevich group of $E/K$
(resp. $E/\mathbb{Q}$).
We denote by $\Sha (E/K) [\pi]$ the $\pi$-torsion subgroup of $\Sha (E/K)$ 
(we also define $\Sha (E/K) [p]$, $\Sha (E/K) [\overline{\pi}]$, and 
$\Sha (E/\mathbb{Q}) [p]$ similarly).
In our situation, 
it is known that both $|\Sha (E/K)|$ and $|\Sha (E/\mathbb{Q})|$ are finite (Rubin \cite{Rub87}).
Then, by the result of Cassels (see, e.g., \cite[Chapter X, Theorem 4.14]{Sil}), 
the $p$-rank of $\Sha (E/\mathbb{Q})$ is even.
Moreover, we can show that $S_{\pi} (E/K) \cong \Sha (E/K) [\pi]$ in our situation.
We write $K = \mathbb{Q} (\sqrt{d})$ with a negative square-free integer $d$.
Let $E^d$ be the quadratic twist of $E$ by $d$. 
We can obtain the following:
\[ \Sha (E/K) [p] \cong \Sha (E/\mathbb{Q}) [p] \oplus \Sha (E^d/\mathbb{Q}) [p] \]
(see, e.g., \cite[Lemma 3.1]{O-P}, the argument given in \cite{Mil}), 
\[ \Sha (E / \mathbb{Q}) [p] \cong \Sha (E^d/\mathbb{Q}) [p] \]
(this was suggested by an anonymous referee of an earlier manuscript,
and the author express his thanks to him/her),
\[ \Sha (E/K) [p] \cong \Sha (E/K) [\pi] \oplus \Sha (E/K) [\overline{\pi}], \quad 
|\Sha (E/K) [\pi]| = |\Sha (E/K) [\overline{\pi}]| \]
(cf. the argument given in \cite[p.260]{Gre83}).
By using these results, we see that the $p$-rank of $\Sha (E/K) [\pi]$ is even.
The claim follows, and 
hence if the $p$-rank of $A (F_0)^\chi$ is odd, then $A (F_0)^\chi$ is not isomorphic to 
$\Gal (M (F_0)/F_0)^\chi$ (and $(\mathcal{U}^1 / \mathcal{E}^1)^\chi$ is not trivial).
\end{rem}

\subsection{Criteria for (Q1)}

\begin{prop}\label{prop_U/E=1}
Assume that $K$, $p$, $E$ satisfy (C1), (C2), (C3).
If $(\mathcal{U}^1 / \mathcal{E}^1)^\chi$ is trivial and $\rank E(\mathbb{Q}) \geq 1$, 
then $X (F_\infty)_{\mathrm{fin}}^{\chi}$ is not trivial.
\end{prop}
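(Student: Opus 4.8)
The plan is to reduce the nonvanishing of $X (F_\infty)_{\mathrm{fin}}^{\chi}$ to the nonvanishing of the $\Gamma$-invariants $(X (F_\infty)^\chi)^\Gamma$, and then to extract these invariants from $\Gal (M (F_\infty)/L(F_\infty))^\chi$, whose nontriviality is already supplied by Theorem Z (ii). First I would invoke the identity $(X (F_\infty)^\chi)^\Gamma = (X (F_\infty)^\chi_{\mathrm{fin}})^\Gamma$ recorded in the Preliminaries. Since the $\Gamma$-invariants of the zero module vanish, it suffices to prove $(X (F_\infty)^\chi)^\Gamma \neq 0$: this gives $(X (F_\infty)^\chi_{\mathrm{fin}})^\Gamma \neq 0$, which forces $X (F_\infty)^\chi_{\mathrm{fin}} \neq 0$.

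Next I would splice the three displayed relations (\ref{eq_pre1}), (\ref{eq_pre2}), (\ref{eq_pre3}). Via (\ref{eq_pre1}), the map $\mathfrak{X} (F_\infty)^\chi_\Gamma \to X (F_\infty)^\chi_\Gamma$ occurring in (\ref{eq_pre3}) is the natural surjection $\Gal (M (F_0)/F_0)^\chi \to A (F_0)^\chi$, whose kernel is $(\mathcal{U}^1/\mathcal{E}^1)^\chi$ by (\ref{eq_pre2}). Feeding this kernel back into the four-term exact sequence (\ref{eq_pre3}) yields the short exact sequence
\[
0 \to (X (F_\infty)^\chi)^\Gamma \to \Gal (M (F_\infty)/L(F_\infty))^\chi_\Gamma \to (\mathcal{U}^1/\mathcal{E}^1)^\chi \to 0.
\]
Under the hypothesis $(\mathcal{U}^1/\mathcal{E}^1)^\chi = 0$ this collapses to an isomorphism $(X (F_\infty)^\chi)^\Gamma \cong \Gal (M (F_\infty)/L(F_\infty))^\chi_\Gamma$, so the whole problem reduces to showing the right-hand side is nontrivial.

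For that final point I would argue as follows. By Theorem Z (ii), the assumption $\rank E(\mathbb{Q}) \geq 1$ gives $\Gal (M (F_\infty)/L(F_\infty))^\chi \neq 0$. This module is the kernel of $\mathfrak{X} (F_\infty)^\chi \to X (F_\infty)^\chi$, hence a submodule of the finitely generated $\Lambda$-module $\mathfrak{X} (F_\infty)^\chi$ and so itself finitely generated over $\Lambda$. Since $T$ lies in the maximal ideal $\mathfrak{m}=(p,T)$ of the local ring $\Lambda$, a nonzero finitely generated $\Lambda$-module $M$ cannot satisfy $M=TM$: if the $\Gamma$-coinvariants $M/TM$ vanished we would have $M = TM \subseteq \mathfrak{m}M$, and Nakayama's lemma would force $M=0$. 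Applying this to $M = \Gal (M (F_\infty)/L(F_\infty))^\chi$ gives $\Gal (M (F_\infty)/L(F_\infty))^\chi_\Gamma \neq 0$, and the isomorphism above then delivers $(X (F_\infty)^\chi)^\Gamma \neq 0$, completing the argument. I expect this Nakayama step — promoting nonvanishing of the module to nonvanishing of its $\Gamma$-coinvariants — to be the only genuinely nonformal point, alongside the bookkeeping needed to confirm that the connecting map in (\ref{eq_pre3}) really does coincide with the surjection in (\ref{eq_pre2}); both are standard but merit an explicit check.
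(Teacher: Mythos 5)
Your argument is correct and follows essentially the same route as the paper's own proof: use the triviality of $(\mathcal{U}^1/\mathcal{E}^1)^\chi$ together with (\ref{eq_pre1})--(\ref{eq_pre3}) to identify $(X (F_\infty)^\chi)^\Gamma$ with $\Gal (M (F_\infty)/L(F_\infty))^\chi_\Gamma$, then combine Theorem Z (ii) with Nakayama's lemma and the identity $(X (F_\infty)^\chi)^\Gamma = (X (F_\infty)_{\mathrm{fin}}^\chi)^\Gamma$. The only cosmetic difference is that you make the splicing of the exact sequences more explicit, whereas the paper appeals directly to the triviality of $(\mathfrak{X} (F_\infty)^\chi)^\Gamma$.
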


\begin{proof}
We first recall that $\Gal (M(F_\infty)/L(F_\infty))^\chi$ is not trivial 
by Theorem Z (ii).

The essential idea of the following argument was given to the author
by Satoshi Fujii (concerning his work \cite[Section 4]{Fuj}). 
(Note that the same idea also can be found in \cite[Th{\'e}or{\`e}me 2.1]{B-NQD}. 
See also \cite[Proposition 4]{A-NQD}.)
Since $(\mathcal{U}^1 / \mathcal{E}^1)^\chi$ is trivial, we see that 
$\mathfrak{X} (F_\infty)^\chi_\Gamma \cong X (F_\infty)^\chi_\Gamma$ 
by using (\ref{eq_pre1}) and (\ref{eq_pre2}).
Recall also that $(\mathfrak{X} (F_\infty)^\chi)^\Gamma$ is trivial.
From these facts and (\ref{eq_pre3}), we see that 
\[ (X (F_\infty)^\chi)^\Gamma \cong \Gal (M(F_\infty)/L(F_\infty))^\chi_\Gamma. \]
Since $\Gal (M(F_\infty)/L(F_\infty))^\chi$ is not trivial, 
we can show that $\Gal (M(F_\infty)/L(F_\infty))^\chi_\Gamma$ is not trivial 
by using Nakayama's lemma.  
Then, $(X (F_\infty)^\chi)^\Gamma = (X (F_\infty)_{\mathrm{fin}}^\chi)^\Gamma$ 
is not trivial.
The assertion has been shown.
\end{proof}

\begin{cor}\label{cor_finite}
Assume that $K$, $p$, $E$ satisfy (C1), (C2), (C3).
If $(\mathcal{U}^1 / \mathcal{E}^1)^\chi$ is trivial, $\rank E(\mathbb{Q}) = 1$,
and $|A(F_0)^\chi| =p$, 
then $X (F_\infty)^{\chi}$ is non-trivial and finite.
\end{cor}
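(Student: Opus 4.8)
The plan is to control $X(F_\infty)^\chi$ through its $\Gamma$-coinvariants, feed in the non-trivial finite submodule produced by Proposition \ref{prop_U/E=1}, and finish with an order count forced by the hypothesis $|A(F_0)^\chi| = p$. Write $Y = X(F_\infty)^\chi$ and $Y_{\mathrm{fin}} = X(F_\infty)^\chi_{\mathrm{fin}}$, and recall $T = \gamma_0 - 1$.

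First I would collect the two inputs. By (\ref{eq_pre1}) (see also Remark \ref{Gamma_coinvariant}) we have $Y_\Gamma \cong A(F_0)^\chi$, which has order $p$ by assumption; in particular $Y \neq 0$, which already gives the non-triviality. Since $(\mathcal{U}^1 / \mathcal{E}^1)^\chi$ is trivial and $\rank E(\mathbb{Q}) \geq 1$, Proposition \ref{prop_U/E=1} gives $Y_{\mathrm{fin}} \neq 0$. It then remains only to prove that $Y$ is finite, i.e.\ that $Y = Y_{\mathrm{fin}}$.

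For this I would apply the snake lemma to multiplication by $T$ on the exact sequence $0 \to Y_{\mathrm{fin}} \to Y \to Y/Y_{\mathrm{fin}} \to 0$. The key point is that $(Y/Y_{\mathrm{fin}})^\Gamma = 0$, which is precisely the equality $(X(F_\infty)^\chi)^\Gamma = (X(F_\infty)^\chi_{\mathrm{fin}})^\Gamma$ recorded in Section \ref{sub_criteria_pre} (alternatively, it holds because $Y/Y_{\mathrm{fin}}$ has no non-trivial finite $\Lambda$-submodule while $\Char Y$ is prime to $T$). The snake lemma then yields
\[ 0 \to (Y_{\mathrm{fin}})_\Gamma \to Y_\Gamma \to (Y/Y_{\mathrm{fin}})_\Gamma \to 0. \]

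To conclude I would run the order count. As $Y_{\mathrm{fin}}$ is a non-zero finite module, Nakayama's lemma gives $(Y_{\mathrm{fin}})_\Gamma \neq 0$, so $p$ divides its order; since $|Y_\Gamma| = p$, the displayed sequence forces $(Y_{\mathrm{fin}})_\Gamma \xrightarrow{\sim} Y_\Gamma$ and $(Y/Y_{\mathrm{fin}})_\Gamma = 0$. But $Y/Y_{\mathrm{fin}}$ is finitely generated over $\Lambda$ with vanishing $\Gamma$-coinvariants, hence $Y/Y_{\mathrm{fin}} = 0$ by Nakayama's lemma applied to the ideal $(T)$, which lies in the maximal ideal of the local ring $\Lambda$. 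Thus $Y = Y_{\mathrm{fin}}$ is finite, and together with $Y \neq 0$ this shows that $X(F_\infty)^\chi$ is non-trivial and finite. The only delicate step is the vanishing $(Y/Y_{\mathrm{fin}})^\Gamma = 0$, which is what secures left-injectivity in the coinvariant sequence; granting that together with the known input $Y_{\mathrm{fin}} \neq 0$, the corollary reduces to the formal order count above.
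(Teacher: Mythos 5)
Your argument is correct and follows the same route as the paper: both obtain $X(F_\infty)^\chi_{\mathrm{fin}} \neq 0$ from Proposition \ref{prop_U/E=1} and then use the hypothesis $|A(F_0)^\chi| = p$ together with $X(F_\infty)^\chi_\Gamma \cong A(F_0)^\chi$ to force $X(F_\infty)^\chi = X(F_\infty)^\chi_{\mathrm{fin}}$. The only difference is that the paper cites the dichotomy ``$X_{\mathrm{fin}}^\chi$ trivial or $X^\chi = X_{\mathrm{fin}}^\chi$'' from the proof of Ozaki's Theorem 2, whereas you reprove it inline via the snake lemma for multiplication by $T$; your justification of the key vanishing $(X^\chi/X^\chi_{\mathrm{fin}})^\Gamma = 0$ (no non-trivial finite submodule plus $T \nmid \Char X(F_\infty)^\chi$) is the right one, and is the substance of that citation.
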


\begin{proof}
When $|A(F_0)^\chi| =p$, we see that 
$X (F_\infty)_{\mathrm{fin}}^\chi$ is trivial or 
$X (F_\infty)^\chi = X (F_\infty)_{\mathrm{fin}}^\chi$ 
(see the proof of \cite[Theorem 2]{Oza97J}).
By Proposition \ref{prop_U/E=1}, we 
see that the former case never occurs 
under the assumption of this corollary.
\end{proof}

By using the argument given in the above proof, we can see that 
if $|A(F_0)^\chi| =p$ and $X (F_\infty)^\chi$ is not finite, 
then $X (F_\infty)_{\mathrm{fin}}^\chi$ is trivial
(cf. \cite[Corollary 2.2]{Itoh18}).
Moreover, we can also show the following result
(cf. Sections 1--4 of \cite{Itoh18}).

Let $\kappa$ be the restriction of $\phi$ on $\Gamma$ 
(see Section \ref{sub_questions}).
Put $r = \rank E(\mathbb{Q})$.
It is known that the characteristic ideal 
$\mathrm{Char}_{\Lambda} X (F_\infty)^\chi$ is contained in 
$(T +1 - \kappa (\gamma_0))^{r-1} \Lambda$ 
(see \cite{Gre01}, \cite{F-K}).

\begin{prop}\label{prop_noNTFS}
Let the notation be as above.
Assume that $K$, $p$, $E$ satisfy (C1), (C2), (C3).
If $r \geq 2$ and $|A (F_0)^\chi|=p^{r-1}$, 
then $X (F_\infty)_{\mathrm{fin}}^\chi$ is trivial and
$\Char X (F_\infty)^\chi = (T +1 - \kappa (\gamma_0))^{r-1} \Lambda$.
\end{prop}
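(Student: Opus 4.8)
The plan is to study $Y := X(F_\infty)^\chi$ as a torsion $\Lambda$-module by comparing the orders of its $\Gamma$-invariants and $\Gamma$-coinvariants with the value at $T=0$ of a generator of its characteristic ideal. First I would fix the arithmetic of $g_0 := T+1-\kappa(\gamma_0)$. Since $\kappa$ is the restriction of the isomorphism $\phi$ to $\Gamma$, it maps $\Gamma$ isomorphically onto $1+p\mathbb{Z}_p$, so $\kappa(\gamma_0)$ is a topological generator of $1+p\mathbb{Z}_p$; as $p$ is odd this forces $v_p(\kappa(\gamma_0)-1)=1$. Thus $g_0$ is a distinguished polynomial of degree one with $v_p(g_0(0)) = v_p(1-\kappa(\gamma_0)) = 1$. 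Writing $\Char Y = (f)$, the stated inclusion $\Char Y \subseteq g_0^{r-1}\Lambda$ gives $f = g_0^{r-1}h$ for some $h\in\Lambda$; since $T\nmid f$ (recorded in the preliminaries) and $T\nmid g_0$, also $T\nmid h$, so $f(0),g_0(0),h(0)$ are all nonzero.

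The key input would be the well-known order formula: for a finitely generated torsion $\Lambda$-module $Y$ with $T\nmid\Char Y$, both $Y^\Gamma$ and $Y_\Gamma$ are finite and
\[ \frac{|Y_\Gamma|}{|Y^\Gamma|} = |\mathbb{Z}_p/(f(0))| = p^{v_p(f(0))}. \]
I would justify this by observing that the snake lemma applied to multiplication by $T$ shows the left-hand ratio is multiplicative in short exact sequences, and that it equals $1$ on any finite $\Lambda$-module $A$ (there $|A^\Gamma|=|A_\Gamma|$, since $\gamma_0-1$ is an endomorphism of a finite group); hence it is a pseudo-isomorphism invariant and can be computed on an elementary module $\bigoplus_i\Lambda/(f_i)$, where $(\Lambda/(f_i))^\Gamma=0$ and $(\Lambda/(f_i))_\Gamma\cong\mathbb{Z}_p/(f_i(0))$. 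By (\ref{eq_pre1}) we have $Y_\Gamma\cong A(F_0)^\chi$, so $|Y_\Gamma|=p^{r-1}$ by hypothesis, while $v_p(f(0)) = (r-1)v_p(g_0(0)) + v_p(h(0)) = (r-1)+v_p(h(0)) \geq r-1$.

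Combining these, $p^{r-1}=|Y^\Gamma|\cdot p^{v_p(f(0))}\geq |Y^\Gamma|\cdot p^{r-1}$, which forces $|Y^\Gamma|=1$ and $v_p(h(0))=0$. To deduce the first conclusion I would use that $Y^\Gamma = (X(F_\infty)_{\mathrm{fin}}^\chi)^\Gamma$ (recorded in the preliminaries) together with the fact that a nonzero finite $\Lambda$-module always has nonzero $\Gamma$-invariants---the pro-$p$ group $\Gamma$ acting on a nonzero finite $p$-group has a nontrivial fixed point by the class equation---so $Y^\Gamma=0$ yields $X(F_\infty)_{\mathrm{fin}}^\chi=0$. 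For the second conclusion, $v_p(h(0))=0$ forces $h$ to be a unit: by Weierstrass preparation $h=p^\mu u P$ with $u$ a unit and $P$ distinguished, and $v_p(h(0))=\mu+v_p(P(0))=0$ forces $\mu=0$ and $\deg P=0$. Hence $\Char Y = g_0^{r-1}\Lambda$, as claimed.

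The main obstacle is the order formula and, within it, pinning down $v_p(g_0(0))=1$ \emph{exactly} rather than merely $\geq1$, since the entire numerical matching that produces both conclusions rests on this equality; the pseudo-isomorphism invariance of the invariant/coinvariant ratio is the technical heart of the argument.
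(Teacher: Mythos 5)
Your argument is correct and follows essentially the same route as the paper: the paper's proof also rests on the formula $|(X(F_\infty)^\chi)^\Gamma|/|X(F_\infty)^\chi_\Gamma| = |f(0)|_p$, the identifications $X(F_\infty)^\chi_\Gamma \cong A(F_0)^\chi$ and $(X(F_\infty)^\chi)^\Gamma = (X(F_\infty)^\chi_{\mathrm{fin}})^\Gamma$, and the divisibility $f \in (T+1-\kappa(\gamma_0))^{r-1}\Lambda$, forcing the same numerical collapse. You merely supply details the paper leaves implicit (the exact valuation $v_p(1-\kappa(\gamma_0))=1$, the pseudo-isomorphism-invariance proof of the order formula, and the Weierstrass argument that $h$ is a unit), all of which are sound.
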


\begin{proof}
Let $f (T)$ be a generator of $\Char X (F_\infty)^\chi$.
It is well known that
\[ \dfrac{|(X (F_\infty)^\chi)^\Gamma|}{|X (F_\infty)^\chi_\Gamma|} =
|f (0)|_p, \]
where $| \cdot |_p$ denotes the normalized $p$-adic (multiplicative)
absolute value (see, e.g., \cite[Exercise 13.12]{Was}).

Recall that $X (F_\infty)^\chi_\Gamma \cong A (F_0)^\chi$ and
$(X (F_\infty)^\chi)^\Gamma = (X (F_\infty)_{\mathrm{fin}}^\chi)^\Gamma$.
As noted above, $f (T)$ is divisible by $(T+1- \kappa(\gamma_0))^{r-1}$.
Hence, if $| A(F_0)^\chi | =p^{r-1}$, it must be satisfied that
$(X (F_\infty)_{\mathrm{fin}}^\chi)^\Gamma$ is trivial and
$\Char X (F_\infty)^\chi = (T+1- \kappa (\gamma_0))^{r-1} \Lambda$.
Note that the triviality of $(X (F_\infty)_{\mathrm{fin}}^\chi)^\Gamma$ implies the 
triviality of $X (F_\infty)_{\mathrm{fin}}^\chi$. 
\end{proof}

\begin{rem}
If $E$ and $p$ satisfy the assumption of the above Proposition \ref{prop_noNTFS},
then Conjecture 1.2 of \cite{F-K} holds for $E$ and $p$. 
However, we mention that this does not imply the validity of 
Conjecture 1.1 of \cite{F-K}.
\end{rem}

\section{Examples for the questions (Q1) and (Q2)}\label{sec_examples}

\subsection{Software used in the example calculation}
The author used PARI/GP \cite{PARI} (formerly 2.11.2 and finally version 2.13.1) mainly to 
compute the ideal class groups, units, values of $L$-functions, etc 
(all examples are computed (or recomputed) by using version 2.13.1).
However, for the computation of the rank of elliptic curves, the author used Sage \cite{Sage} 
(\texttt{mwrank} \cite{mwrank} was mainly used). 
In the computation on Sage, the article \cite{Kimura} was very helpful.
The author also would like to express thanks to Iwao Kimura for giving comments.

\subsection{Fukuda-Komatsu's examples}\label{sec_F-K}
In this subsection, we put $K = \mathbb{Q} (\sqrt{-1})$ and $p=5$.
We can find a negative example for (Q1) in Fukuda-Komatsu's paper \cite{F-K}.

\begin{example}[see Fukuda-Komatsu \cite{F-K}]\label{example_negativeQ1}
Let $E$ be an elliptic curve defined by the Weierstrass equation 
\[ y^2 = x^3 + 99x. \]
Then $K$, $p$, $E$ satisfy (C1), (C2), (C3).
This case is treated in \cite[Section 4.1]{F-K}.
Note that $\rank E(\mathbb{Q})$ is 2 
(\cite[Table des valeurs des $\lambda (l_{\mathfrak{p},i}^*)$: I]{BGS}), 
and then $X (F_\infty)^\chi$ is not finite by Theorem Z (i).
It is also known that $|A (F_0)|=5$, hence the 
infiniteness of $X (F_\infty)^\chi$ implies that $|A (F_0)^\chi|=5$.
By Proposition \ref{prop_noNTFS}, 
we see that $X (F_\infty)_{\mathrm{fin}}^\chi$ is trivial.
Then, this is a negative example for (Q1).
On the other hand, (Q2) has an affirmative answer for this case by Theorem Z (ii). 
Hence the assertion of (Q2) is actually weaker than that of (Q1).
Note that Proposition \ref{prop_noNTFS} also gives an alternative proof of the 
fact (already confirmed in \cite{F-K}) that 
$\Char X (F_\infty)^\chi = (T +1 - \kappa (\gamma_0)) \Lambda$.
\end{example}

In Sections 4.2 and 4.3 of \cite{F-K}, the examples such that 
$X (F_\infty)^\chi$ is finite are also given.
We shall introduce some of them.

\begin{example}[see Fukuda-Komatsu \cite{F-K}]\label{example_fin1}
Let $E$ be an elliptic curve defined by the Weierstrass equation 
\[ y^2 = x^3 + 1331x. \]
Then $K$, $p$, $E$ satisfy (C1), (C2), (C3).
It is stated in \cite[Section 4.2]{F-K} that $|A(F_0)| =5$ and $X (F_\infty)^\chi$ is finite.
Note that it is not explicitly stated that $|A(F_0)^\chi| =5$ in \cite{F-K}.
(Although it seems that they had obtained this fact, 
the author also confirmed this fact.)
Hence we see that $X (F_\infty)^\chi$ is non-trivial and finite.
Similarly, it is also stated in \cite[Section 4.2]{F-K} that 
\[ y^2 = x^3 + 2197x \]
is also an example such that $X (F_\infty)^\chi$ is finite.
For this case, it can be also checked that $|A (F_0)^\chi| = 5$, and hence 
$X (F_\infty)^\chi$ is non-trivial.
\end{example}

\begin{rem}
In the computation concerning Example \ref{example_fin1} (and below Example \ref{sub_307}), 
the author used an explicit Kummer generator of $F_0$ over $K$ written in \cite[p.547]{F-K} 
to obtain a defining polynomial of $F_0$.
For the curves given in Example \ref{example_fin1}, the author checked that 
$|A (F_0)^\chi| = 5$ by using the following two ways.
One is computing the $\Delta$-action for a generator of $A (F_0)$.
The other is comparing the information on the $\chi^i$-part of $\mathfrak{X} (F_\infty)$ 
given in \cite[Table des valeurs des $\lambda (l_{\mathfrak{p},i}^*)$: I]{BGS} 
and ideal class group of the quadratic subextension of $F_0/K$.
\end{rem}

In their computation, 
Fukuda-Komatsu \cite{F-K} used the $p$-adic $L$-function and the 
``Ichimura-Sumida type'' criterion for elliptic units to determine 
the characteristic polynomials.
Their method is also a powerful tool to confirm our questions.
For example, our Corollary \ref{cor_finite} is not applicable for the cases of  
$y^2 = x^3 + 1331x$ and $y^2 = x^3 + 2197x$.
However, the ``Ichimura-Sumida type'' criterion 
seems to need the information on the elliptic units of 
higher layers of $F_\infty/ F_0$ in general. 
Our criteria only need the information on $F_0$, which can be easily 
computed by using existing software (at least when $p=3$).
Hence, our criteria seem suitable to confirm various examples 
(see Section \ref{sub_11}).

We also note that if an explicit generator $f(T)$ of $\Char X (F_\infty)^\chi$ is known, 
we can check whether $X (F_\infty)_{\mathrm{fin}}^\chi$ is trivial or not by 
comparing $|A (F_0)^\chi|$ and $|f(0)|_p$. 
(See the proof of Proposition \ref{prop_noNTFS}. 
See also \cite{Itoh18}.)

\subsection{Example for (Q2) with $K = \mathbb{Q} (\sqrt{-1})$ and $p=5$}\label{sub_307}
Here we give an example for (Q2) such that $\rank E( \mathbb{Q}) =0$ and 
$X (F_\infty)^\chi$ is non-trivial. 

\begin{example}\label{ex_307}
We put $K = \mathbb{Q} (\sqrt{-1})$ and $p=5$.
Let $E$ be an elliptic curve defined by the Weierstrass equation 
\[ y^2 = x^3 - 307^2 x. \]
In this case, it is known that $L(E/\mathbb{Q},1) \neq 0$ (see \cite[Theorem 1]{Raz}).
The author checked that $|A(F_0)^\chi|=5$.
Then, the criterion given in Remark \ref{rem_old} is applicable, 
and hence this is a non-trivial example such that (Q2) has an affirmative answer.
We also remark that Proposition \ref{prop_Q2} (ii) is not applicable for this example.
However, the author also checked the non-triviality of $(\mathcal{U}^1 / \mathcal{E}^1)^\chi$  
by using a more direct method (Remark \ref{rem_unit}).
\end{example}

\begin{rem}
For the above example, the order of $A(F_0)$ is $5$.
Although the fact $|A (F_0)^\chi| = 5$ can be confirmed by observing the $\Delta$-action, 
we can also check this by using the following way.
In this situation, we can see that if $|\Sha (E/\mathbb{Q})|$ is divisible by $5$, then 
$A (F_0)^\chi$ is not trivial. 
(See Remark \ref{rem_old}. See also the proof of \cite[Corollary 6.10]{Rub99}.)
We also note that the full Birch and Swinnerton-Dyer conjecture holds for $E$ 
(see \cite[p.26, Theorem]{Rub91m}).
Then, by computing the analytic order of $\Sha (E/\mathbb{Q})$, 
we can confirm that $A (F_0)^\chi$ is not trivial, and hence $|A (F_0)^\chi| = 5$.
\end{rem}

\subsection{Examples with $K = \mathbb{Q} (\sqrt{-11})$ and $p=3$}\label{sub_11}
Let $E_\circ^d$ be an elliptic curve defined by the Weierstrass equation 
\[ y^2 = x^3 - 264 d^2 x + 1694 d^3, \]
where $d$ is a non-zero square-free integer.
We put $K = \mathbb{Q} (\sqrt{-11})$ and $p=3$.
It is well known that $E_\circ^d$ has complex multiplication by $O_K$ 
(see, e.g., \cite{Hada}).
Note also that $E_\circ^d$ has good reduction at $p=3$ if and only if $d \equiv 0 \pmod{3}$ 
(see \cite{Hada}).
We also note that $E_\circ^d$ and $E_\circ^{-11d}$ are isomorphic over $K$.
Hence, in the remaining part of this subsection, 
we assume that $d$ satisfies the following condition.
\begin{itemize}
\item[(D1)] $d$ is a square-free integer satisfying $d \equiv 0 \pmod{3}$ and 
$d \not\equiv 0 \pmod{11}$.
\end{itemize}
Then, under (D1), $K$, $p$, $E_\circ^d$ satisfy (C1), (C2), (C3).
We choose $\mathfrak{p}$ as a prime generated by $(-1-\sqrt{-11})/2$.
We put $d' = d/3$, then 
we can see that 
\[ F_0 = K \left(\sqrt{d' \, (11- \sqrt{-11})} \right). \]
This can be obtained by using an explicit endomorphism given in \cite[Theorem 3]{Raj}.
(However, it seems that the multiplication by $(-1+\sqrt{-11})/2$ 
endomorphism given in \cite[Theorem 3]{Raj} is actually the  
multiplication by $(-1-\sqrt{-11})/2$ endomorphism.)

Let $\overline{\mathfrak{p}}$ be the conjugate of $\mathfrak{p}$.
Then, $\overline{\mathfrak{p}}$ is unramified in $F_0$.
Moreover, we can see that $\overline{\mathfrak{p}}$ splits completely in $F_0$ 
if and only if $d' \equiv 1 \pmod{3}$ (i.e., $d \equiv 3 \pmod{9}$).
We also note that $\mathcal{U}^1$ contains a primitive third root of unity 
if and only if $\overline{\mathfrak{p}}$ splits completely in $F_0$.
Hence, by Proposition \ref{prop_Q2} (ii), we have obtained the following result.
\begin{itemize}
\item If $d \equiv 3 \pmod{9}$, then (Q2) has an affirmative answer for $E_\circ^d$.
\end{itemize}

Let $L (E_\circ^d /\mathbb{Q}, s)$ be the complex $L$-function of 
$E_\circ^d$ over $\mathbb{Q}$.
We also note that the root number of $E_\circ^d$ is $-1$ when $d >0$. 
(See \cite[Theorem 19.1.1]{Gross}. Recall also that $d$ is assumed to be prime to $11$.)
Then, $L (E_\circ^d /\mathbb{Q}, 1)=0$ for this case.
Hence if $d>0$ and 
the Birch and Swinnerton-Dyer conjecture (or the parity conjecture) holds for $E_\circ^d$, 
we see that $\rank E_\circ^d (\mathbb{Q}) \geq 1$ (and (Q2) has an affirmative 
answer by Theorem Z (ii)).

We shall give several examples for the case when $d \equiv 6 \pmod{9}$.
First, we shall consider (Q2).
For this question, we can use Theorem Z (ii) and Proposition \ref{prop_Q2}.
Recall that $\mathcal{U}^1$ does not contain a primitive third root of unity 
when $d \equiv 6 \pmod{9}$. 
Hence we can check whether $(\mathcal{U}^1 /\mathcal{E}^1)^\chi$ is trivial or not 
by using the method stated in Remark \ref{rem_unit}.

\begin{example}\label{ex_11Q2p}
Assume that $d>0$ and $d \equiv 6 \pmod{9}$.
In the range $1 < d < 3000$, the following values satisfy 
that $|A (F_0)^\chi| \neq 1$.
(We note that $A (F_0) = A (F_0)^\chi$ in this situation.
Hence, the process of extracting the $\chi$-part is not needed.)
\begin{equation}\label{eq_11Q2p} 
\begin{array}{rl}
d = & 78, 87, 141, 177, 186, 195, 213, 285, 357, 366, \\
  & 393, 447, 501, 510, 537, 609, 699, 717, 753, 807, \\
  & 843, 861, 870, 915, 942, 969, 987, 1005, 1149, 1167, \\
  & 1203, 1230, 1293, 1365, 1374, 1482, 1545, 1554, 1635, 1662, \\
  & 1689, 1707, 1779, 1842, 1851, 1887, 1923, 1959, 2085, 2121, \\
  & 2139, 2202, 2247, 2301, 2346, 2454, 2463, 2481, 2490, 2562, \\
  & 2571, 2589, 2634, 2679, 2715, 2769, 2877, 2922, 2949, 2967, 2985
\end{array}
\end{equation}
Recall that $L (E_\circ^d /\mathbb{Q}, 1)=0$ in this situation.
Hence, if $L' (E_\circ^d /\mathbb{Q}, 1) \neq 0$, we see that $\rank E_\circ^d (\mathbb{Q}) = 1$ 
(\cite[Corollary C]{Rub87}).
In the above values, the condition $L' (E_\circ^d /\mathbb{Q}, 1) \neq 0$ is satisfied 
except for the cases when $d= 141, 807, 2121$.
Moreover, for all of these three values, the author checked that 
$\rank E_\circ^d (\mathbb{Q}) = 3$.
Hence, for the values of $d$ listed above, (Q2) has an affirmative answer 
by Theorem Z (ii).
(For $d= 141, 807, 2121$, 
it can be checked that $(\mathcal{U}^1 /\mathcal{E}^1)^\chi$ is not trivial.
Hence, Proposition \ref{prop_Q2} is also applicable for these three values.)
\end{example}

\begin{example}\label{ex_11Q2n}
Assume that $d < 0$ and $d \equiv 6 \pmod{9}$.
In the range $-3000 < d <0$, the following $48$ values of $d$ satisfy that 
$|A (F_0)^\chi| \neq 1$.
\begin{equation}\label{eq_11Q2n} 
\begin{array}{rl}
d = & -2955, -2910, -2874, -2847, -2757, -2730, -2703, -2649, -2613, -2559, \\
    & -2514, -2478, -2469, -2433, -2361, -2298, -2271, -2262, -2154, -2109, \\ 
    &  -2010, -1974, -1965, -1758, -1731, -1695, -1623, -1461, -1281, -1263, \\
    &  -1227, -1137, -1119, -1110, -1065, -1038, -1002, -993, -678, -651, \\
    & -489, -399, -390, -327, -255, -174, -93, -21. 
\end{array}
\end{equation}
We can see that the $45$ values except for $-2910, -2361, -1731$ satisfy 
that $(\mathcal{U}^1 /\mathcal{E}^1)^\chi$ is not trivial, 
then (Q2) has an affirmative answer for these $45$ values.
We note that $L (E_\circ^d /\mathbb{Q},1)$ is approximately $0$ for several values in the above list.
(Since the root number of $E_\circ^d$ is $+1$ in this case (\cite[Theorem 19.1.1]{Gross}), 
if $L (E_\circ^d /\mathbb{Q},1) = 0$ then $\rank E_\circ^d (\mathbb{Q}) \geq 2$ under 
the parity conjecture.)
See the following Examples \ref{ex_NTFS} and \ref{ex2361}. 
(For the case when $d=-2361$, we will later see that (Q1) has an affirmative answer, 
and hence (Q2) also has an affirmative answer.)
\end{example}

Next, we shall consider (Q1).

\begin{example}\label{ex_11Q1}
We shall back to the situation treated in Example \ref{ex_11Q2p}.
Assume that $d>0$ and $d \equiv 6 \pmod{9}$.
For the values given in (\ref{eq_11Q2p}), it was checked that 
$\rank (E_\circ^d) \geq 1$.
Moreover, if $(\mathcal{U}^1 /\mathcal{E}^1)^\chi$ is trivial  
then (Q1) has an affirmative answer by Proposition \ref{prop_U/E=1}.
For the values stated in (\ref{eq_11Q2p}), the following values satisfy that 
$(\mathcal{U}^1 /\mathcal{E}^1)^\chi$ is trivial.
\[ \begin{array}{rl}
 d = & 78, 87, 186, 195, 213, 285, 393, 447, 501, 510, 537, 609, 699, 717, \\
     & 753, 861, 870, 915, 969, 987, 1005, 1167, 1230, 1293, 1365, 1482, \\
     & 1545, 1635, 1662, 1707, 1779, 1842, 1851, 1887, 1923, 1959, 2085, \\
     & 2139, 2247, 2454, 2463, 2481, 2562, 2571, 2634, 2679, 2715, 2769, \\
     & 2877, 2922, 2967, 2985.
\end{array} \]
Note that all of these values satisfy $\rank (E_\circ^d) = 1$.
In addition, if $|A (F_0)^\chi| = 3$, then $X (F_\infty)^\chi$ is 
non-trivial and finite by Corollary \ref{cor_finite}.
In the above list, the condition $|A (F_0)^\chi| = 3$ is satisfied 
except for the cases when $d= 1167, 1482, 2247$.
\end{example}

The above are the affirmative examples for (Q1).
We can also find many negative examples (that is, 
$X (F_\infty)^\chi$ is infinite and 
$X (F_\infty)_{\mathrm{fin}}^{\chi}$ is trivial).

\begin{example}\label{ex_NTFS}
As noted in Example \ref{ex_11Q2n}, several values of $d$ stated in (\ref{eq_11Q2n}) satisfy that 
$L (E_\circ^d /\mathbb{Q},1)$ is approximately $0$.
Such values are the following: 
\[ \begin{array}{rl}
d = & -2874, -2847, -2730, -2703, -2649, -2514, -2361, -2271, -2154, -1974, \\
  & -1965, -1758, -1119, -1002, -651, -489, -399, -390, -255, -174, -21.
\end{array} \]
The author checked that $\rank E_\circ^d (\mathbb{Q}) = 2$ for all of the above values.
Hence, for these values, we see that $X (F_\infty)^\chi$ is infinite by Theorem Z (i).
Moreover, if $|A(F_0)^\chi| = 3$, then $X (F_\infty)_{\mathrm{fin}}^{\chi}$ is trivial 
(and $\Char X (F_\infty)^{\chi} = (T+1 -\kappa (\gamma_0)) \Lambda$) 
by Proposition \ref{prop_noNTFS}.
In the above list, we can check that $|A(F_0)^\chi| = 3$ except for the cases when 
$d=-2703, -2361$.
\end{example}

\begin{example}\label{ex_NTFSr3}
We shall consider the cases when $d= 141, 807, 2121$.
Recall that $\rank E_\circ^d (\mathbb{Q}) = 3$ for these values
(see Example \ref{ex_11Q2p}).
Moreover, it can be checked that $|A(F_0)^\chi| = 9$ for all of these values. 
Then we see that 
$X (F_\infty)_{\mathrm{fin}}^{\chi}$ is trivial and 
$\Char X (F_\infty)^{\chi} = (T+1 -\kappa (\gamma_0))^2 \Lambda$ 
by Proposition \ref{prop_noNTFS}.
\end{example}

The above Example \ref{ex_NTFSr3} gives examples of rank $3$ elliptic curves 
such that Conjecture 1.2 of \cite{F-K} is valid.

\begin{rem}
We can also find examples satisfying $X (F_\infty)^\chi$ is infinite and 
$X (F_\infty)_{\mathrm{fin}}^{\chi}$ is trivial for the case when 
$d \equiv 3 \pmod{9}$.
For instance, $d=-159,-114,-51$ are such values.
\end{rem}

The following is an example such that $X (F_\infty)^\chi$ is infinite and 
$X (F_\infty)_{\mathrm{fin}}^{\chi}$ is not trivial.

\begin{example}\label{ex2361}
We shall consider the case when $d=-2361$.
Recall that $\rank  E^{-2361}_\circ (\mathbb{Q}) = 2$, and hence $X (F_\infty)^\chi$ is infinite 
(Example \ref{ex_NTFS}).
In this case, we also see that $(\mathcal{U}^1 /\mathcal{E}^1)^\chi$ is trivial 
(Example \ref{ex_11Q2n}).
Thus, by Proposition \ref{prop_U/E=1}, 
we see that $X (F_\infty)_{\mathrm{fin}}^{\chi}$ is not trivial.
\end{example}

As a conclusion of this subsection, 
for the case of $E_\circ^d$ with an integer $d$ satisfying (D1), 
we have confirmed the following:
\begin{itemize}
\item In the range $-3000 < d < 3000$, (Q2) has an affirmative answer 
except for the cases when $d= -2910, -1731$.
\item Similar to the situation treated in Section \ref{sec_F-K}, 
both affirmative and negative examples exist for (Q1).
\end{itemize}

\appendix

\section{Similar questions for the case when $p=2$}\label{sec_appendix}

\subsection{Questions and results}\label{sub_results}
In this Appendix B, we fix $K = \mathbb{Q} (\sqrt{-7})$ and $p=2$.
For a non-zero square free integer $d$, let $E_*^d$ be the elliptic curve over $\mathbb{Q}$
defined by the following equation
\[ y^2 = x^3 + 21 d x^2 + 112 d^2 x. \]
It is well known that $E_*^d$ has complex multiplication by $O_K$.
This situation is well studied, and we shall recall several facts.
Note that $E_*^d$ has good reduction at $2$ if and only if $d \equiv 1 \pmod{4}$ 
(see, e.g., \cite{Hada}).
Moreover, if $d$ is prime to $7$, then $E_*^d$ is isomorphic to $E_*^{-7d}$ over $K$ 
(see, e.g., \cite[Chapter X]{Sil}, \cite[Section 7]{Gon}, \cite[Section 2]{Mil}).
Hence, it is sufficient to consider $E_*^d$ such that $d$ satisfies the following condition. 
\begin{itemize}
\item[(D2)] $d$ is a square-free integer satisfying
$d \equiv 1 \pmod{4}$ and $d \not\equiv 0 \pmod{7}$.
\end{itemize} 
(See also, e.g., \cite[Section 7]{Gon}, \cite{C-C}, \cite{CLT}.)
Note that $p=2$ splits in $K$, and the class number of $K$ is $1$.
Let $\mathfrak{p}$ be a prime of $K$ lying above $2$. 
We put $\pi = \psi (\mathfrak{p})$ 
(where $\psi$ is the Gr{\"o}ssencharacter of $E_*^{d}$ over $K$), 
and $F_n = K (E_*^d [\pi^{n+2}])$ for all $n \geq 0$ 
(this definition is slightly different from the case when $p \geq 3$).
Note that $F_0/ K$ is a quadratic extension (see, e.g., \cite[Section 2]{Gon}, \cite{C-C}).
It is known that $F_n /K$ is totally ramified at $\mathfrak{p}$ 
(see, e.g., \cite[(3.6)Proposition (i)]{Rub87}, \cite{C-C}).
We denote by $\mathfrak{P}$ the unique prime of $F_0$ lying above $\mathfrak{p}$.
We put $F_\infty = \bigcup_n F_n$, then $F_\infty /F_0$ is a $\mathbb{Z}_2$-extension
unramified outside $\mathfrak{P}$.

In the following, we choose $\mathfrak{p}$ as a prime generated by $(-1-\sqrt{-7})/2$.
It is known that 
\begin{equation}\label{eq_F0p2}
F_0 = K (\sqrt{d \sqrt{-7}}). 
\end{equation}
(See \cite[Lemma 2.2]{C-C}, however, notice the difference of the choice of $\mathfrak{p}$.)

We define the notation $\Gamma$, $\Lambda$,
$X (F_\infty)$, $\mathfrak{X} (F_\infty)$, $M (F_\infty)$, $L (F_\infty)$, etc.~as
similar to Section \ref{sec_intro}.
In this appendix, we shall consider the following:

\begin{question} Let the notation be as in the previous paragraphs, and assume that $d$ satisfies (D2).
\begin{itemize}
\item[(Q1t)] When $X (F_\infty)$ is not trivial, is $X (F_\infty)_{\mathrm{fin}}$ not trivial?
\item[(Q2t)] When $X (F_\infty)$ is not trivial, is
$\Gal (M (F_\infty)/ L(F_\infty))$ not trivial?
\end{itemize}
\end{question}

Concerning the above questions, we shall show the following:

\begin{thm}\label{Th_p=2}
Assume that $d$ satisfies (D2).
\begin{itemize}
\item[(i)] If $d \equiv 5 \pmod{8}$, then (Q1t) has an affirmative answer for $E_*^d$.
\item[(ii)] Suppose that $d \equiv 1 \pmod{8}$.
If $\Gal (M (F_\infty)/ L(F_\infty))$ is not trivial, then $X (F_\infty)_{\mathrm{fin}}$ not trivial.
\end{itemize}
\end{thm}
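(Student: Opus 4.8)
The plan is to transplant the $\Gamma$-(co)homology machinery of Section~\ref{sec_criteria} to the present $p=2$ situation and to reduce both assertions to the non-triviality of $X(F_\infty)^\Gamma$. First I would record the $p=2$ analogues of the tools of Subsection~\ref{sub_criteria_pre}: the class field theory isomorphism $\Gal(M(F_0)/L(F_0)) \cong \mathcal{U}^1/\mathcal{E}^1$; the coinvariant identifications $X(F_\infty)_\Gamma \cong A(F_0)$ and $\mathfrak{X}(F_\infty)_\Gamma \cong \Gal(M(F_0)/F_0)$; the absence of a non-trivial finite $\Lambda$-submodule in $\mathfrak{X}(F_\infty)$; the equality $X(F_\infty)^\Gamma = (X(F_\infty)_{\mathrm{fin}})^\Gamma$ (valid since $T$ does not divide a generator of $\Char X(F_\infty)$, because $X(F_\infty)_\Gamma \cong A(F_0)$ is finite); and the validity of the $\{\mathfrak{P}\}$-adic Leopoldt conjecture for $F_0$, still available from Brumer's theorem \cite{Bru} because $F_0/K$ is abelian and $p=2$ is permitted there. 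The one genuinely new feature is that, without the $\chi$-projection used for odd $p$, the extension $F_\infty/F_0$ now contributes to the \emph{full} module: $\mathfrak{X}(F_\infty)_\Gamma = \Gal(M(F_0)/F_0)$ is infinite, so $T$ divides a generator of $\Char \mathfrak{X}(F_\infty)$ and $\mathfrak{X}(F_\infty)^\Gamma \cong \mathbb{Z}_2$ is non-zero. Hence the four-term sequence \eqref{eq_pre3} must be replaced by the full six-term $\Gamma$-cohomology sequence attached to $0 \to \Gal(M(F_\infty)/L(F_\infty)) \to \mathfrak{X}(F_\infty) \to X(F_\infty) \to 0$.

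The key point is that the induced map $\mathfrak{X}(F_\infty)^\Gamma \to X(F_\infty)^\Gamma$ vanishes: the rank-one free part of $\mathfrak{X}(F_\infty)^\Gamma$ comes from the ramified $\mathbb{Z}_2$-direction and therefore lands in the inertia quotient $\Gal(M(F_\infty)/L(F_\infty))$, not in the unramified module $X(F_\infty)$ whose $\Gamma$-coinvariants $A(F_0)$ are finite. Granting this, the six-term sequence collapses to
\[
0 \to X(F_\infty)^\Gamma \to \Gal(M(F_\infty)/L(F_\infty))_\Gamma \to \mathcal{U}^1/\mathcal{E}^1 \to 0,
\]
exactly as in the odd case. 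Since a non-zero finite $\Lambda$-module has non-zero $\Gamma$-invariants, proving $X(F_\infty)_{\mathrm{fin}} \neq 0$ is equivalent to showing that this surjection $\Gal(M(F_\infty)/L(F_\infty))_\Gamma \to \mathcal{U}^1/\mathcal{E}^1$ fails to be injective.

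Next I would carry out the local computation that separates the two congruence classes. Because $-7 \equiv 1 \pmod 8$ we have $\sqrt{-7} \in \mathbb{Q}_2$, so by \eqref{eq_F0p2} the completion is $(F_0)_{\mathfrak{P}} = \mathbb{Q}_2(\sqrt{d\sqrt{-7}})$; reducing $d\sqrt{-7}$ modulo $8$ shows that $(F_0)_{\mathfrak{P}} = \mathbb{Q}_2(\zeta_4)$ when $d \equiv 5 \pmod 8$, whereas $(F_0)_{\mathfrak{P}}$ is a ramified quadratic field \emph{not} containing $\zeta_4$ when $d \equiv 1 \pmod 8$. This is the $p=2$ avatar of the dichotomy in Proposition~\ref{prop_Q2}(ii) and Remark~\ref{rem_unit}: the presence or absence of $\zeta_4$ in $\mathcal{U}^1$ governs whether $\mathcal{U}^1/\mathcal{E}^1$ carries non-trivial $\mathbb{Z}_2$-torsion, which it does for $d \equiv 5$ (coming from $\zeta_4 \in \mathcal{U}^1$ while $\zeta_4 \notin F_0$) and which vanishes for $d \equiv 1$ (there $\pm 1$ exhausts the local $2$-power roots of unity and $-1 \in \mathcal{E}^1$, so $\mathcal{U}^1/\mathcal{E}^1$ is $\mathbb{Z}_2$-torsion-free, as in Remark~\ref{rem_unit}). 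For part (ii) I would then exploit this torsion-freeness: in the collapsed sequence $X(F_\infty)^\Gamma$ is forced to equal the $\mathbb{Z}_2$-torsion of $\Gal(M(F_\infty)/L(F_\infty))_\Gamma$, so non-triviality of the inertia module beyond its forced free $\mathbb{Z}_2$-direction propagates, via Nakayama's lemma, to $X(F_\infty)_{\mathrm{fin}}$. For part (i) I would instead feed in the surplus $\zeta_4$-torsion, using (as in Proposition~\ref{prop_Q2}(ii) and Remark~\ref{rem_Maire}) that $\zeta_4$ is a local but not a global unit, to obtain the required non-injectivity unconditionally, i.e.\ whenever $A(F_0) = X(F_\infty)_\Gamma$ is non-trivial.

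The hard part will be the bookkeeping of the rank-one free ($\mathbb{Z}_2$-extension) direction, which is invisible in the odd $\chi$-part theory but omnipresent here. Concretely, I must (a) justify rigorously that $\mathfrak{X}(F_\infty)^\Gamma \to X(F_\infty)^\Gamma$ is zero, so that the six-term sequence collapses to the displayed three-term one; and (b) compare the torsion subgroups of $\Gal(M(F_\infty)/L(F_\infty))_\Gamma$ and of $\mathcal{U}^1/\mathcal{E}^1$ precisely enough to detect the kernel $X(F_\infty)^\Gamma$. Step (b) is where the $\zeta_4$ computation does its real work, and where the contrast between the unconditional statement (i) and the conditional statement (ii) originates: for $d \equiv 5 \pmod 8$ the extra $\zeta_4$-torsion makes the comparison go through on its own, while for $d \equiv 1 \pmod 8$ one can only convert non-triviality of the inertia module into non-triviality of $X(F_\infty)_{\mathrm{fin}}$, which is exactly the asserted implication.
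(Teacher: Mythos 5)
Your proposal has genuine gaps in both parts, and it misses the mechanism the paper actually uses. The central objects in the paper's proof are the ideal class $\mathrm{cl}(\mathfrak{P})$ and capitulation, which never appear in your sketch. For (i), the paper shows that $d \equiv 5 \pmod 8$ forces $d$ to have a prime divisor $\ell \equiv \pm 3 \pmod 8$; genus theory then produces the unramified quadratic extension $F_0(\sqrt{\ell^*})/F_0$ in which $\mathfrak{P}$ is inert, so $\mathrm{cl}(\mathfrak{P})$ is non-trivial (Lemma \ref{P_nontrivial}); since $|A(F_n)^{\Gal(F_n/F_0)}|$ is bounded, $\mathrm{cl}(\mathfrak{P})$ capitulates up the tower and Ozaki's proposition yields $X(F_\infty)_{\mathrm{fin}} \neq 0$ (Lemma \ref{capitulation}). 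Your substitute mechanism --- $\zeta_4 \in \mathcal{U}^1 \setminus \mathcal{E}^1$ when $d \equiv 5 \pmod 8$ --- only proves the analogue of Proposition \ref{prop_Q2}(ii), i.e.\ that $\Gal(M(F_\infty)/L(F_\infty)) \neq 0$, which is (Q2t), not (Q1t). In the exact sequence you are aiming for, $X(F_\infty)^\Gamma$ is the \emph{kernel} of $\Gal(M(F_\infty)/L(F_\infty))_\Gamma \to (\mathcal{U}^1/\mathcal{E}^1)_{\mathbb{Z}_2\text{-tors}}$; making the target larger by exhibiting $\zeta_4$-torsion works \emph{against} you, and you give no reason why that kernel should be non-zero. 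This is not a bookkeeping issue but a missing idea.

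Two further points. First, your computation of the coinvariants is wrong: since $F_\infty \subset M(F_0)$, one has $\mathfrak{X}(F_\infty)_\Gamma \cong \Gal(M(F_0)/F_\infty)$, not $\Gal(M(F_0)/F_0)$; this is \emph{finite} by the $\{\mathfrak{P}\}$-adic Leopoldt conjecture, so $T$ does not divide $\Char \mathfrak{X}(F_\infty)$ and $\mathfrak{X}(F_\infty)^\Gamma = 0$ (no finite submodule) --- your claim that $\mathfrak{X}(F_\infty)^\Gamma \cong \mathbb{Z}_2$ is false, and the correct collapsed sequence is $0 \to X(F_\infty)^\Gamma \to \Gal(M(F_\infty)/L(F_\infty))_\Gamma \to (\mathcal{U}^1/\mathcal{E}^1)_{\mathbb{Z}_2\text{-tors}} \to 0$, with the torsion subgroup on the right (your displayed sequence, with the infinite group $\mathcal{U}^1/\mathcal{E}^1$ as cokernel of a map of finite groups, cannot be exact). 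Second, for (ii) your torsion-freeness claim does not follow from ``$\pm 1$ exhausts the local roots of unity and $-1 \in \mathcal{E}^1$'': the $\mathbb{Z}_2$-torsion of $\mathcal{U}^1/\mathcal{E}^1$ is governed by whether the closure of the unit group is a direct summand, not by roots of unity alone. The paper proves it (Lemma \ref{unit_p=2}) only under the additional hypothesis that $\mathfrak{P}$ is principal, by writing the fundamental unit as $\eta = \gamma^2/2$ for a generator $\gamma$ of $\mathfrak{P}$ and checking $\mathrm{ord}_{\mathfrak{P}}(\eta - 1) = 1$; the complementary case $\mathrm{cl}(\mathfrak{P}) \neq 1$ is again disposed of by the capitulation lemma. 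Your local identification of $(F_0)_{\mathfrak{P}}$ as $\mathbb{Q}_2(\zeta_4)$ or $\mathbb{Q}_2(\sqrt{3})$ according to $d \bmod 8$ is correct and consistent with the paper, but by itself it does not carry either part of the theorem.
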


Hence, in this situation, if (Q2t) has an affirmative answer then (Q1t) also has, 
and vice versa.
(Note that a similar assertion to that of stated in Remark \ref{rem_Q1toQ2} also holds.)

One can also show an analog of Theorem Y for this situation 
(see Section \ref{rem_ThB}).
Thus, (Q2t) has an affirmative answer when $\rank E_*^d (\mathbb{Q}) \geq 1$.
By combining this fact and Theorem \ref{Th_p=2}, we obtain the following:

\begin{cor}
Assume that $d$ satisfies (D2).
If $\rank E_*^d (\mathbb{Q}) \geq 1$, then (Q1t) has an affirmative answer for $E_*^d$.
\end{cor}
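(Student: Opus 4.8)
The plan is to reduce the corollary to the two inputs already at hand: the analog of Theorem~Y established in Section~\ref{rem_ThB}, which asserts that $\rank E_*^d(\mathbb{Q}) \geq 1$ forces $\Gal(M(F_\infty)/L(F_\infty))$ to be non-trivial, and Theorem~\ref{Th_p=2}. Since condition (D2) requires $d \equiv 1 \pmod 4$, every admissible $d$ satisfies exactly one of $d \equiv 1 \pmod 8$ or $d \equiv 5 \pmod 8$, and I would argue these two cases separately.

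In the case $d \equiv 5 \pmod 8$, there is nothing to do beyond quoting Theorem~\ref{Th_p=2}~(i): it already states that (Q1t) has an affirmative answer for $E_*^d$, and in fact the hypothesis $\rank E_*^d(\mathbb{Q}) \geq 1$ is not even needed here. Thus the content of the corollary in this case is entirely absorbed into part~(i) of the theorem.

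In the case $d \equiv 1 \pmod 8$, I would first invoke the analog of Theorem~Y: because $\rank E_*^d(\mathbb{Q}) \geq 1$, the module $\Gal(M(F_\infty)/L(F_\infty))$ is non-trivial, so that (Q2t) has an affirmative answer. Feeding this into Theorem~\ref{Th_p=2}~(ii) then yields that $X(F_\infty)_{\mathrm{fin}}$ is non-trivial; since a non-trivial $X(F_\infty)_{\mathrm{fin}}$ forces $X(F_\infty)$ itself to be non-trivial, this gives the affirmative answer to (Q1t) (indeed it gives the slightly stronger, unconditional non-triviality of $X(F_\infty)_{\mathrm{fin}}$). Combining the two cases completes the argument.

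The corollary itself is therefore a formal consequence of results proved earlier, and the real work lies upstream. The main obstacle is establishing the analog of Theorem~Y for the $p=2$ setting in Section~\ref{rem_ThB}, where the Coates--Wiles argument behind Theorem~Y (Lemma~35 of \cite{C-W}) must be adapted to the dyadic situation, with its altered indexing $F_n = K(E_*^d[\pi^{n+2}])$ and the delicate behavior of local units and roots of unity at the prime $2$. Once that input and Theorem~\ref{Th_p=2} are in place, the case split above is routine.
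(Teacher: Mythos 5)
Your argument is correct and is exactly the paper's intended derivation: the corollary is stated as following from the analog of Theorem~Y (giving non-triviality of $\Gal(M(F_\infty)/L(F_\infty))$ when the rank is positive) combined with Theorem~\ref{Th_p=2}, and your case split on $d \bmod 8$ is just the explicit unpacking of that combination. No gaps.
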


Furthermore, we can also show an analog of Theorem X.
That is, If $\rank E_*^d (\mathbb{Q}) \geq 2$, then $X (F_\infty)$ is infinite 
(see Section \ref{rem_ThB}).

\subsection{Proof of Theorem \ref{Th_p=2}}\label{sub_p2_proof}
Let the notation be as in Section \ref{sub_results}.
Recall that $F_0/K$ is totally ramified at $\mathfrak{p}$, and
$\mathfrak{P}$ is the unique prime of $F_0$ lying above $\mathfrak{p}$.
Let $\mathrm{cl} (\mathfrak{P})$ be the ideal class of $F_0$ containing $\mathfrak{P}$.
We note that the order of $\mathrm{cl} (\mathfrak{P})$ is equal to $1$ or $2$
because the class number of $K$ is $1$.

\begin{lem}\label{capitulation}
Assume that $d$ satisfies (D2).
If $\mathrm{cl} (\mathfrak{P})$ is not trivial (i.e., $\mathfrak{P}$ is not principal),
then $X (F_\infty)_{\mathrm{fin}}$ is not trivial.
\end{lem}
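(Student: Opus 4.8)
The plan is to detect $X(F_\infty)_{\mathrm{fin}}$ through its $\Gamma$-invariants and to manufacture a nonzero $\Gamma$-invariant element out of the Frobenius of $\mathfrak{P}$. First I would record the two structural facts that hold in this setting exactly as in the body of the paper. Since $\mathfrak{P}$ is the unique ramified prime in $F_\infty/F_0$ and it is totally ramified, one has $X(F_\infty)_\Gamma \cong A(F_0)$, the analog of Remark \ref{Gamma_coinvariant}; in particular $X(F_\infty)_\Gamma$ is finite, so $T$ does not divide a generator of $\Char X(F_\infty)$. From this, the same reasoning used in Section \ref{sub_criteria_pre} yields $X(F_\infty)^\Gamma = (X(F_\infty)_{\mathrm{fin}})^\Gamma$: the quotient $X(F_\infty)/X(F_\infty)_{\mathrm{fin}}$ has no nonzero finite $\Lambda$-submodule and, since $T \nmid \Char X(F_\infty)$, its $T$-kernel is finite, hence trivial; taking $T$-kernels in $0 \to X(F_\infty)_{\mathrm{fin}} \to X(F_\infty) \to X(F_\infty)/X(F_\infty)_{\mathrm{fin}} \to 0$ then gives the claimed equality. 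Consequently it suffices to exhibit a nonzero element of $X(F_\infty)^\Gamma = X(F_\infty)[T]$.

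Next I would produce that element. Let $\widetilde{\mathfrak{P}}$ be a prime of $L(F_\infty)$ above $\mathfrak{P}$, and let $\phi \in X(F_\infty) = \Gal(L(F_\infty)/F_\infty)$ be the Frobenius, in the unramified extension $L(F_\infty)/F_\infty$, of the prime of $F_\infty$ below $\widetilde{\mathfrak{P}}$. Because $\mathfrak{P}$ has residue degree $1$ over $\mathfrak{p}$ and $F_\infty/F_0$ is totally ramified at $\mathfrak{P}$, the image of $\phi$ under the natural restriction $X(F_\infty) \to X(F_\infty)_\Gamma \cong A(F_0)$ is the Frobenius of $\mathfrak{P}$ in the Hilbert $p$-class field $L(F_0)/F_0$, which equals $\mathrm{cl}(\mathfrak{P})$ by Artin reciprocity. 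As $\mathrm{cl}(\mathfrak{P}) \neq 0$ by hypothesis, $\phi$ is nontrivial.

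It then remains to show $\phi \in X(F_\infty)^\Gamma$, i.e. $T\phi = 0$; this is the crux. Writing $\sigma$ for a generator of the inertia group of $\widetilde{\mathfrak{P}}$ in $\Gal(L(F_\infty)/F_0)$, which maps isomorphically onto $\Gamma$ (as $\mathfrak{P}$ is totally ramified in $F_\infty/F_0$ and unramified in $L(F_\infty)/F_\infty$), the $\Lambda$-action is given by conjugation, $\sigma \phi \sigma^{-1} = \gamma_0 \cdot \phi$, so $T\phi = [\sigma,\phi]$ is a commutator inside the decomposition group $D$ of $\widetilde{\mathfrak{P}}$. Thus $T\phi = 0$ as soon as $D$ is abelian, equivalently as soon as the local extension $\widehat{L(F_\infty)}_{\widetilde{\mathfrak{P}}}/(F_0)_{\mathfrak{P}}$ is abelian. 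Here I would invoke the Lubin--Tate description of the local tower: since $E_*^d$ has good reduction at $\mathfrak{p}$, the completion $\widehat{F_\infty}$ at $\mathfrak{P}$ is the Lubin--Tate extension of $K_{\mathfrak{p}}$ attached to $\pi$, hence is contained in $K_{\mathfrak{p}}^{\mathrm{ab}}$; therefore $\widehat{F_\infty}/(F_0)_{\mathfrak{P}}$ is abelian, and because $F_\infty/F_0$ is totally ramified at $\mathfrak{P}$ the maximal unramified extension of $\widehat{F_\infty}$ is the compositum of $\widehat{F_\infty}$ with the maximal unramified extension of $(F_0)_{\mathfrak{P}}$, again inside $K_{\mathfrak{p}}^{\mathrm{ab}}$. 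As $\widehat{L(F_\infty)}_{\widetilde{\mathfrak{P}}}$ is unramified over $\widehat{F_\infty}$, it lies in this field, so $D$ is abelian and $T\phi = 0$. This gives $0 \neq \phi \in X(F_\infty)^\Gamma = (X(F_\infty)_{\mathrm{fin}})^\Gamma$, whence $X(F_\infty)_{\mathrm{fin}} \neq 0$. The main obstacle is exactly this local step: one must verify carefully that the good-reduction/Lubin--Tate input really places the entire unramified-over-$\widehat{F_\infty}$ extension inside $K_{\mathfrak{p}}^{\mathrm{ab}}$, so that the decomposition group is abelian and the commutator $T\phi$ vanishes.
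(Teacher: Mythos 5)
Your proof is correct, and every step checks out, but it takes a genuinely different route from the paper. The paper argues by capitulation: following the proof of Theorem 1 of Greenberg's 1976 paper, $|A(F_n)^{\Gal(F_n/F_0)}|$ is bounded in $n$, hence so is the order of the subgroup $D_n$ generated by the class of the unique prime of $F_n$ above $\mathfrak{P}$; since the extension map $A(F_0)\to A(F_n)$ sends $\mathrm{cl}(\mathfrak{P})$ to the $[F_n:F_0]$-th power of that class, $\mathrm{cl}(\mathfrak{P})$ capitulates for $n\gg 0$, and Ozaki's proposition (the capitulation kernel in a totally ramified $\mathbb{Z}_p$-extension detects the maximal finite $\Lambda$-submodule) concludes. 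You instead exhibit an explicit nonzero element of $X(F_\infty)^\Gamma=(X(F_\infty)_{\mathrm{fin}})^\Gamma$, namely the Frobenius over $\widetilde{\mathfrak{P}}$, whose nontriviality is read off from $X(F_\infty)_\Gamma\cong A(F_0)$ and whose $\Gamma$-invariance amounts to commutativity of the decomposition group in $\Gal(L(F_\infty)/F_0)$. The local step you flag as the crux does hold: every unramified extension of $\widehat{F_\infty}$ is the compositum of $\widehat{F_\infty}$ with an unramified extension of $K_{\mathfrak{p}}$, and both the Lubin--Tate tower $\widehat{F_\infty}$ and $K_{\mathfrak{p}}^{\mathrm{ur}}$ are abelian over $K_{\mathfrak{p}}\cong\mathbb{Q}_2$, so the completion of $L(F_\infty)$ at $\widetilde{\mathfrak{P}}$ sits inside $K_{\mathfrak{p}}^{\mathrm{ab}}$ and the commutator $T\phi$ vanishes. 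The two arguments exploit the same underlying phenomenon --- the abelianness of the local tower is also what bounds $A(F_n)^{\Gal(F_n/F_0)}$ in Greenberg's computation --- but yours is self-contained at the level of elements and avoids invoking Ozaki's proposition, at the cost of carrying out the local class field theory explicitly, whereas the paper's version is shorter because each of its inputs is quotable from the literature.
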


\begin{proof}
This assertion is essentially well known, and
this can be shown by using the arguments given in the papers treating
original Greenberg's conjecture.
(See also \cite[Corollary 3.5]{F-I} which treats a similar situation to ours.)

For $n \geq 0$, let $A (F_n)$ be the Sylow $2$-subgroup of the ideal class group of $F_n$,
and $D_n$ the subgroup of $A (F_n)$ consists of the classes
containing a power of the prime lying above $\mathfrak{P}$.
Assume that $\mathrm{cl} (\mathfrak{P})$ is not trivial.
As noted above, the order of $\mathrm{cl} (\mathfrak{P})$ is $2$.

In our situation, 
we can see that $|A (F_n)^{\Gal (F_n/F_0)}|$ is bounded with respect to $n$ 
(cf. the proof of \cite[Theorem 1]{Gre76}),
and then $|D_n|$ is also bounded.
From this, we can show that $\mathrm{cl} (\mathfrak{P})$ capitulates in $F_n$
if $n$ is sufficiently large (cf. \cite[p.267]{Gre76}).
Thus, by using \cite[p.218, Proposition]{Oza95}, 
we see that $X (F_\infty)_{\mathrm{fin}}$ is not trivial.
\end{proof}

We also show the following lemma.
(A similar result for the case of real quadratic fields is known.
See \cite[Lemma 2]{O-T97}.)

\begin{lem}\label{P_nontrivial}
Assume that $d$ satisfies (D2).
If $d$ has a rational prime divisor $\ell$ which satisfies
$\ell \equiv \pm 3 \pmod{8}$, then $\mathrm{cl} (\mathfrak{P})$ is not trivial.
\end{lem}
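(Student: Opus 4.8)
We need to show that if $d$ has a prime divisor $\ell \equiv \pm 3 \pmod 8$, then the ideal class $\mathrm{cl}(\mathfrak{P})$ is nontrivial, i.e. $\mathfrak{P}$ is not principal in $F_0$. Recall from (\ref{eq_F0p2}) that $F_0 = K(\sqrt{d\sqrt{-7}})$ with $K = \mathbb{Q}(\sqrt{-7})$, and that $\mathfrak{P}$ is the unique prime above $\mathfrak{p} = ((-1-\sqrt{-7})/2)$, which is totally ramified in $F_0/K$. Since the class number of $K$ is $1$, we have $\mathfrak{p} = (\pi)$ for $\pi = (-1-\sqrt{-7})/2$, so as ideals in $O_{F_0}$ we get $\mathfrak{P}^2 = (\pi)$ (the ramification index is $2$). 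Thus $\mathrm{cl}(\mathfrak{P})$ is trivial precisely when $\mathfrak{P}$ itself is principal.

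\textbf{The plan.} My plan is to argue by contradiction: suppose $\mathfrak{P} = (\beta)$ for some $\beta \in O_{F_0}$. Then $\beta^2$ generates $\mathfrak{P}^2 = (\pi)$, so $\beta^2 = u\pi$ for a unit $u$ of $F_0$. Since the only roots of unity in $F_0$ should be limited (as $F_0$ is a degree-$4$ field over $\mathbb{Q}$ that is an explicit radical extension), I would pin down the unit group structure and extract the essential Kummer-theoretic constraint. The cleanest route is to pass to ramification and local data at the prime $\ell$: the hypothesis $\ell \equiv \pm 3 \pmod 8$ is exactly a quadratic-residue condition controlling whether $\sqrt{-7}$, equivalently $2$, is a square modulo $\ell$, and this should obstruct principality. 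Concretely, I would analyze the splitting of $\ell$ in the tower $\mathbb{Q} \subset K \subset F_0$ using the explicit generator $\sqrt{d\sqrt{-7}}$, and relate the ideal class of $\mathfrak{P}$ to a genus-theoretic or Rédei-symbol invariant of the quartic field $F_0$.

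\textbf{The main step (the quadratic residue computation).} The heart of the argument is to translate principality of $\mathfrak{P}$ into a solvability condition that $\ell \equiv \pm 3 \pmod 8$ violates. Since $\ell \mid d$ and $d$ is squarefree with $d \equiv 1 \pmod 4$, the prime $\ell$ ramifies in $F_0/K$ along the quadratic piece cut out by $\sqrt{d\sqrt{-7}}$. If $\mathfrak{P} = (\beta)$ were principal, then taking norms down to $K$ (where $\mathfrak{P}$ has norm $\mathfrak{p}$, a prime of residue degree $1$ over $\mathbb{Q}$ lying above $2$), and reducing the relation $\beta^2 = u\pi$ modulo a prime above $\ell$, I expect to force $2$ (or equivalently $-7$, since $-7 \equiv 1 \pmod 8$ makes $2$ a square mod $7$ but the relevant symbol is $\left(\tfrac{2}{\ell}\right)$) to be a quadratic residue mod $\ell$. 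But $\left(\tfrac{2}{\ell}\right) = 1$ iff $\ell \equiv \pm 1 \pmod 8$, and by hypothesis $\ell \equiv \pm 3 \pmod 8$, giving $\left(\tfrac{2}{\ell}\right) = -1$, a contradiction. I anticipate the comparison with the real-quadratic analogue in \cite[Lemma 2]{O-T97} will provide the precise symbol manipulation, since that result is the exact model for this genus-theoretic criterion.

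\textbf{Main obstacle.} The technical difficulty will be handling the interaction of the two quadratic layers (the $\sqrt{-7}$ from $K$ and the $\sqrt{d\sqrt{-7}}$ defining $F_0$), since $F_0/\mathbb{Q}$ is a non-Galois or dihedral quartic and the relevant class-group invariant lives at the level of $F_0$, not of a quadratic field. I would need to be careful to work with the correct reciprocity symbol — likely pulling the computation back to the quadratic subfield $\mathbb{Q}(\sqrt{-7d})$ or $\mathbb{Q}(\sqrt{d})$ where ordinary genus theory applies, then lifting to $F_0$ — and to verify that the unit $u$ in $\beta^2 = u\pi$ does not absorb the obstruction (i.e. that the roots of unity and fundamental units in $F_0$ are squares locally at $\ell$). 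Managing this unit ambiguity cleanly, so that the residue symbol $\left(\tfrac{2}{\ell}\right)$ emerges unobstructed, is the step I expect to require the most care.
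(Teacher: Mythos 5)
Your plan does not close, and the gap you yourself flag --- ``that the unit $u$ in $\beta^2=u\pi$ does not absorb the obstruction'' --- is exactly where the one genuinely needed idea lives. The paper's proof never touches explicit generators or units at all: it sets $\ell^*=\pm\ell$ with $\ell^*\equiv 1\pmod 4$ (so $\ell^*\equiv 5\pmod 8$ by hypothesis), shows that $F_0(\sqrt{\ell^*})/F_0$ is an \emph{everywhere unramified} quadratic extension (the tame ramification of the primes above $\ell$ in $F_0/K$ and in $K(\sqrt{\ell^*})/K$ cancels in the compositum, $\ell^*\equiv 1\pmod 4$ kills ramification at $2$, and disjointness from $F_0$ comes from the ramification at $7$), and then observes that $\mathfrak{P}$ is \emph{inert} there: $(F_0)_{\mathfrak{P}}/\mathbb{Q}_2$ is ramified quadratic while $\mathbb{Q}_2(\sqrt{\ell^*})$ is the unramified quadratic extension of $\mathbb{Q}_2$. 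Class field theory then gives $\mathrm{cl}(\mathfrak{P})\neq 1$ immediately. Note that the congruence on $\ell$ is exploited \emph{locally at $2$} (i.e.\ at $\mathfrak{P}$ itself), not at $\ell$.

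Your proposed route does the local computation at a prime $\lambda$ of $F_0$ above $\ell$ instead, and there the claim that principality ``forces $\bigl(\tfrac{2}{\ell}\bigr)=1$'' is not substantiated and is not immediate. What the relation $\beta^2=u\pi$ gives at $\lambda$ (where $F_0/K$ is tamely ramified) is that $\pm\pi$ times a global unit is a square in the residue field; since $\pi\overline{\pi}=2$, extracting $\bigl(\tfrac{2}{\ell}\bigr)$ from this requires combining the symbols at \emph{both} primes of $K$ above $\ell$ (or a reciprocity step), tracking the sign coming from the unit group of $K$, and controlling the image of the rank-one unit group of the totally imaginary quartic $F_0$ in the local square classes. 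That last point is precisely resolved by knowing that the quadratic character cutting out $F_0(\sqrt{\ell^*})$ is unramified everywhere, i.e.\ by the construction you did not make. Without it, the argument has a hole rather than a removable technicality.
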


\begin{proof}
We put $\ell^* = \ell$ or $-\ell$ so that $\ell^*$ satisfies $\ell^* \equiv 1 \pmod{4}$.
Then $K (\sqrt{\ell^*})/K$ is unramified outside the primes lying above $\ell^*$, and
every prime of $K$ lying above $\ell^*$ is totally ramified in $K (\sqrt{\ell^*})$.

We note that every prime of $K$ lying above $\ell$ also ramifies in $F_0$.
Since the prime of $K$ lying above $7$ is ramified in $F_0$, 
$K (\sqrt{\ell^*})$ and $F_0$ are disjoint.
(See (\ref{eq_F0p2})).

Note that every prime of $K$ lying above $\ell$ is tamely ramified in $F_0 (\sqrt{\ell^*})$.
Then, we can see that $F_0 (\sqrt{\ell^*}) / F_0$ is an unramified extension
by combining the above results.

On the other hand, the rational prime $2$ is inert in $\mathbb{Q} (\sqrt{\ell^*})$.
Since $2$ splits in $K$ and $\mathfrak{p}$ ramifies in $F_0$,
we see that $\mathfrak{P}$ is inert in $F_0 (\sqrt{\ell^*})$.
This implies the assertion of the lemma.
\end{proof}

\begin{proof}[Proof of Theorem \ref{Th_p=2} (i)]
Since $d \equiv 5 \pmod{8}$, there is
a rational prime divisor $\ell$ of $d$ which satisfies
$\ell \equiv \pm 3 \pmod{8}$.
Then the theorem follows from Lemmas \ref{capitulation} and \ref{P_nontrivial}.
\end{proof}

We shall give a significant lemma to prove (ii).
Similar to the case when $p \geq 3$, we denote by $(F_0)_\mathfrak{P}$ the completion of $F_0$ at
$\mathfrak{P}$.
We also define $\mathcal{U}^1$, $E (F_0)^1$, and $\mathcal{E}^1$ similarly 
(see Section \ref{sub_criteria_pre}).

\begin{lem}\label{unit_p=2}
Assume that $d$ satisfies (D2) and $d \equiv 1 \pmod{8}$.
If $\mathfrak{P}$ is principal,
then $\mathcal{U}^1 / \mathcal{E}^1$ has no non-trivial $\mathbb{Z}_2$-torsion element.
\end{lem}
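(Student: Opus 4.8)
The plan is to pin down the local field $(F_0)_{\mathfrak{P}}$ explicitly, read off the torsion of $\mathcal{U}^1$, and then reduce the assertion to a single congruence for one global unit, which the principality of $\mathfrak{P}$ will supply. First I would determine $(F_0)_{\mathfrak{P}}$. Since $2$ splits in $K$ and $\mathfrak{p}=((-1-\sqrt{-7})/2)$ is totally ramified in $F_0$, we have $K_{\mathfrak{p}}=\mathbb{Q}_2$ and, by $(\ref{eq_F0p2})$, $(F_0)_{\mathfrak{P}}=\mathbb{Q}_2(\sqrt{\alpha})$ with $\alpha=d\sqrt{-7}$ a ramified quadratic extension. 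A short $2$-adic computation shows that for this choice of $\mathfrak{p}$ the image of $\sqrt{-7}$ is $\equiv 3\pmod 8$; combined with $d\equiv 1\pmod 8$ this gives $\alpha\equiv 3\pmod 8$, so $(F_0)_{\mathfrak{P}}\cong\mathbb{Q}_2(\sqrt 3)$ and in particular $i\notin(F_0)_{\mathfrak{P}}$. Hence the group of $2$-power roots of unity of $(F_0)_{\mathfrak{P}}$ is $\{\pm 1\}$, the torsion subgroup of $\mathcal{U}^1$ is $\{\pm 1\}$, and $-1$ is not a square in $(F_0)_{\mathfrak{P}}$, so $\mathcal{U}^1\cong\mathbb{Z}_2^2\oplus\mathbb{Z}/2\mathbb{Z}$ as a topological $\mathbb{Z}_2$-module.

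Next I would carry out the reduction. Because $-1\in E(F_0)^1\subseteq\mathcal{E}^1$ already accounts for the full torsion $\{\pm 1\}$ of $\mathcal{U}^1$, and because the $\{\mathfrak{P}\}$-adic analog of Leopoldt's conjecture (valid since $F_0/K$ is abelian) gives that the $\mathbb{Z}_2$-rank of $\mathcal{E}^1$ equals the unit rank $1$ of the totally complex quartic field $F_0$, the quotient $\mathcal{U}^1/\mathcal{E}^1$ is the rank-one module $\mathbb{Z}_2^2/\mathbb{Z}_2\bar\varepsilon$, where $\bar\varepsilon$ is the image of a fundamental unit $\varepsilon$ in $\mathcal{U}^1/\{\pm 1\}\cong\mathbb{Z}_2^2$. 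This module is torsion-free exactly when $\bar\varepsilon$ is primitive, i.e.\ when $\varepsilon\notin\{\pm 1\}(\mathcal{U}^1)^2$. Since every square lies in $\mathcal{U}^2$ and $-1\in\mathcal{U}^2$, it therefore suffices to produce a single global unit not lying in $\mathcal{U}^2$, equivalently to show $\mathcal{E}^1\not\subseteq\mathcal{U}^2$ (the image of $\mathcal{E}^1$ in $\mathcal{U}^1/\mathcal{U}^2\cong\mathbb{F}_2$ being onto).

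This is where I would use principality. Writing $\mathfrak{P}=(\varpi)$ and letting $\sigma$ generate $\Gal(F_0/K)$, the prime $\mathfrak{P}$ is $\sigma$-stable (it is the unique prime above $\mathfrak{p}$), so $w:=\sigma(\varpi)/\varpi$ is a genuine global unit of $F_0$. I would then compute its class in $\mathcal{U}^1/\mathcal{U}^2$ from the identity $w-1=\mathrm{Tr}_{(F_0)_{\mathfrak{P}}/\mathbb{Q}_2}(\varpi)/\varpi-2$: writing $\varpi=a+b\sqrt{\alpha}$ with $\alpha\equiv 3\pmod 8$, the condition that $\varpi$ be a uniformizer forces $a$ to be odd, so $\mathrm{Tr}(\varpi)=2a$ has valuation $2$, whence $\mathrm{Tr}(\varpi)/\varpi$ has valuation $1$ while $2$ has valuation $2$. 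Thus $w-1$ has valuation exactly $1$, so $w\notin\mathcal{U}^2$, giving $\mathcal{E}^1\not\subseteq\mathcal{U}^2$ and finishing the argument.

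I expect the main obstacle to be the last paragraph: confirming that $\sigma$ fixes $\mathfrak{P}$ so that $w$ is a global (not merely local) unit, and the trace-valuation computation that places $w$ outside $\mathcal{U}^2$, which hinges on the different of $(F_0)_{\mathfrak{P}}/\mathbb{Q}_2$ having exponent $2$ — precisely the input guaranteed by $d\equiv 1\pmod 8$ through $\alpha\equiv 3\pmod 8$. I would also take care over the two facts feeding the reduction, namely that at the prime above $2$ one still has $(\mathcal{U}^1)^2\subseteq\mathcal{U}^2$ and that $-1\notin(\mathcal{U}^1)^2$, since both rely on $i\notin(F_0)_{\mathfrak{P}}$; it is exactly here that the hypothesis $d\equiv 1\pmod 8$ (rather than $d\equiv 5\pmod 8$, where $(F_0)_{\mathfrak{P}}\cong\mathbb{Q}_2(i)$ and the torsion structure changes) is indispensable.
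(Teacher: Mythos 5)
Your argument is correct, and at its core it runs on the same engine as the paper's proof: principality of $\mathfrak{P}$ is used to manufacture a global unit whose image in $\mathcal{U}^1$ lies in $\mathcal{U}^1\setminus\mathcal{U}^2$, and the identification $(F_0)_{\mathfrak{P}}\cong\mathbb{Q}_2(\sqrt{3})$ (valid exactly because $d\equiv 1\pmod 8$ and $\sqrt{-7}\mapsto 3\pmod 8$ under the $\mathfrak{p}$-adic embedding) shows the torsion of $\mathcal{U}^1$ is $\{\pm 1\}$, which is already absorbed into $\mathcal{E}^1$. The execution differs in a worthwhile way. The paper follows Li's Lemma 5: it takes a suitable generator $\gamma$ of $\mathfrak{P}$, asserts that the full unit group of $F_0$ is $\langle -1,\eta\rangle$ with $\eta=\gamma^2/2$, and computes $\mathrm{ord}_{\mathfrak{P}}(\eta^2-1)=2$, hence $\mathrm{ord}_{\mathfrak{P}}(\pm\eta-1)=1$; this requires importing the determination of the fundamental unit from \cite{Li}. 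You instead take $w=\sigma(\varpi)/\varpi$, which is visibly a global unit since $\mathfrak{P}$ is $\sigma$-stable, and get $\mathrm{ord}_{\mathfrak{P}}(w-1)=1$ from the identity $w-1=\mathrm{Tr}(\varpi)/\varpi-2$ together with the fact that a uniformizer $a+b\sqrt{\alpha}$ of $\mathbb{Q}_2(\sqrt{3})$ must have $a$ odd. Your route is self-contained and, as you note, does not require $w$ to be fundamental: any element of $\mathcal{E}^1$ outside $\mathcal{U}^2$ certifies that the fundamental unit is not in $\{\pm 1\}(\mathcal{U}^1)^2$, since $\{\pm 1\}(\mathcal{U}^1)^2\subseteq\mathcal{U}^2$. (In fact $w$ and $\eta$ are essentially the same unit up to sign and inversion, since $\gamma\sigma(\gamma)$ generates $\mathfrak{p}$, so the two computations are cousins.) The only small economy you could add: for a rank-one unit group the appeal to the $\{\mathfrak{P}\}$-adic Leopoldt conjecture is not needed, as the closure of $\langle\varepsilon\rangle$ is automatically of $\mathbb{Z}_2$-rank one unless $\varepsilon$ is a local root of unity, which it is not.
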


\begin{proof}
We mention that a quite similar result in a slightly different situation was given in
Li \cite{Li} (Theorem 1 (2) and Lemma 5 (2)).
That is, the filed $\mathbb{Q} (\sqrt[4]{-q})$ with a prime number $q$ 
satisfying $q \equiv 7 \pmod{16}$ was considered in \cite{Li}.
Our case is $F_0 = \mathbb{Q} (\sqrt[4]{-7 d^2})$ with $d \equiv 1 \pmod{8}$.
Our result can be also shown by using the same argument, and hence we only state an outline
of the proof.

By taking a suitable generator $\gamma$ of $\mathfrak{P}$, we can see that
the group of units of $F_0$ is generated by $-1$ and $\eta = \gamma^2 /2$
(see the proof of \cite[Lemma 5]{Li}).
Let $\mathrm{ord}_{\mathfrak{P}} (\cdot )$ be the normalized (additive) 
valuation at $\mathfrak{P}$.
By using a similar argument given in the proof of \cite[Lemma 5]{Li}, we see that
$\mathrm{ord}_{\mathfrak{P}} (\eta^2 -1) =2$.
Hence, we also see that
\[ \mathrm{ord}_{\mathfrak{P}} (\eta -1) =1 \qquad \text{and} \qquad
\mathrm{ord}_{\mathfrak{P}} (-\eta -1) =1. \]
We can see that the torsion units of $\mathcal{U}^1$ are $\pm 1$.
(Note that $(F_0)_{\mathfrak{P}}$ is isomorphic to $\mathbb{Q}_2 (\sqrt{3})$ when 
$d \equiv 1 \pmod{8}$. See also \cite{Li}.)
From these facts, 
we can see that $\mathcal{U}^1 / \mathcal{E}^1$ has no non-trivial 
$\mathbb{Z}_2$-torsion element.
\end{proof}

\begin{proof}[Proof of Theorem \ref{Th_p=2} (ii)]
If $\mathrm{cl} (\mathfrak{P})$ is not trivial, then $X (F_\infty)_\mathrm{fin}$ is not trivial
by Lemma \ref{capitulation}.
Hence, in the following, we assume that $\mathrm{cl} (\mathfrak{P})$ is trivial.

Similar to the proof of Proposition \ref{prop_U/E=1}, we use the argument given in 
\cite[Section 4]{Fuj}.
Under the above assumption, we see that
$\mathcal{U}^1 / \mathcal{E}^1$ has no non-trivial $\mathbb{Z}_2$-torsion element
by Lemma \ref{unit_p=2}.
From this, we see that $\mathfrak{X} (F_\infty)_\Gamma \cong
 X (F_\infty)_\Gamma$, and then 
\[ X (F_\infty)_{\mathrm{fin}}^\Gamma =X (F_\infty)^\Gamma \cong
\Gal (M (F_\infty)/L(F_\infty))_\Gamma. \]
(We used the validity of the $\{ \mathfrak{P} \}$-adic analog of Leopoldt's conjecture and 
the fact that $\mathfrak{X} (F_\infty)$ does not have a non-trivial
finite $\Lambda$-submodule. See \cite{Gre78}.)
The assertion follows from this.
\end{proof}

\subsection{Remarks}\label{rem_ThB}
Assume that $d$ satisfies (D2).
In our situation of this Appendix \ref{sec_appendix}, we can obtain an analog of Theorem Y.
That is, if $\rank E_*^d (\mathbb{Q}) \geq 1$ then $\Gal (M (F_\infty)/L(F_\infty))$ is 
not trivial.
This can be shown by using a similar method 
(see the proofs of Theorem 11 (p.231) and Lemmas 33, 35 of \cite{C-W}).
Note that the main difference from the case when $p \geq 3$ seems that certain cohomology 
groups (corresponding to $H^1 (G_\infty, \mathcal{E}_{\pi^{n+1}})$ in \cite{C-W}) 
are non-trivial (cf. also the proof of \cite[Lemma 2.7]{Gon}). 
However, by using Sah's lemma, 
we can see that the orders of these groups are 
bounded with respect to $n$, and hence this does not give an essential difficulty.
(See, e.g., \cite{Sah}, \cite[Lemma A.2]{B-R}. 
See also \cite[(2.2)Lemma]{Rub87}.)

We can also obtain an analog of Theorem X by using the known method 
for the case when $p \geq 3$.
(See \cite[p.551, Remark]{Rub87}, \cite[pp.364--366]{Gre01}.
See also \cite{FKY} for a more detailed argument.)
We put $r = \rank E_*^d (\mathbb{Q})$, and assume that $r \geq 2$.
Then, for every sufficiently large $n$, 
we can construct an unramified extension $L_n/F_n$ whose Galois group 
is isomorphic to $(\mathbb{Z} / 2^{n-c} \mathbb{Z})^{\oplus r-1}$, 
where $c$ is a constant which does not depend on $n$.
We can show this assertion by imitating the argument given in \cite{FKY}.
However, as similar to the above paragraph, it is necessary to pay attention to  
the difference which comes from the situation that $p=2$.
In particular, $H^1 (\Gal (F_n/K), E_*^d [\pi^{n+2}])$ is not trivial 
(see., e.g., the proof of \cite[Lemma 2.7]{Gon}).

As noted in \cite{Gon}, \cite{CLT} (see also \cite[Theorem 19.1.1]{Gross}), 
it is known that $L (E_*^d, 1) =0$ when $d <0$ 
(recall that $d \not\equiv 0 \pmod{7}$).
Hence, $\rank E_*^d (\mathbb{Q})$ is expected to be positive for this case.
We also mention that a sufficient condition to satisfy $\rank E_*^d (\mathbb{Q})=1$ is
given in \cite[Theorem 1.4]{CLT}.

We shall give another remark.
The $\mathbb{Z}_2$-rank of $\mathfrak{X} (F_\infty)$ was considered in \cite{C-C}.
See also the recently announced preprint \cite{Li2}.
These results seem helpful for future research on our questions (Q1t), (Q2t).

\begin{acknowledgments}
This research was born out of the discussions with Satoshi Fujii.
In particular, 
informing the ideas concerning the results of \cite{Fuj} from him 
was one of the motivations to start this study.
(See also Remark \ref{rem_Maire}, the proofs of Proposition \ref{prop_U/E=1} and Theorem \ref{Th_p=2} (ii).)
The author would express thanks to him. 
The author also would express his gratitude to Takashi Fukuda for his kind responses to 
the inquiries about computation, and to Keiichi Komatsu for his encouragement.
They also gave significant comments on an earlier manuscript. 
The author also would like to express thanks to Christian Maire 
for giving comments on an earlier manuscript (Remark \ref{rem_Maire}) 
and to Jianing Li for sending a manuscript of a stronger version of \cite{Li} and 
giving information about his results. 
The author also would like to express his gratitude to the referee of 
an earlier manuscript (who seems different from the person mentioned in Remark \ref{rem_old}). 
His/her comments motivated the author to improve the contents of this paper.
This work was partly supported by JSPS KAKENHI Grant Number JP15K04791.
\end{acknowledgments}

\bigskip

\begin{flushleft}
Tsuyoshi Itoh \\
Division of Mathematics, 
Education Center,
Faculty of Social Systems Science, \\
Chiba Institute of Technology, \\
2--1--1 Shibazono, Narashino, Chiba, 275--0023, Japan \\
e-mail : \texttt{tsuyoshi.itoh@it-chiba.ac.jp}

\end{flushleft}

\end{document}